\documentclass[12pt]{article}
\usepackage{latexsym, amssymb, amsthm}
\usepackage{dsfont}

\usepackage{mathrsfs}
\usepackage{amsxtra}
\usepackage{amstext}
\usepackage{amscd}
\usepackage{latexsym}
\usepackage{bbold}

\textheight=23cm
\textwidth = 6.375 true in
\topmargin=-16mm
\marginparsep=0cm
\oddsidemargin=-0.7cm
\evensidemargin=-0.7cm
\headheight=13pt
\headsep=0.8cm
\parskip=0pt
\baselineskip=27pt
\hfuzz=4pt
\widowpenalty=10000

\DeclareMathAlphabet\gothic{U}{euf}{m}{n}


\makeatletter
\def\eqnarray{\stepcounter{equation}\let\@currentlabel=\theequation
\global\@eqnswtrue
\tabskip\@centering\let\\=\@eqncr
$$\halign to \displaywidth\bgroup\hfil\global\@eqcnt\z@
  $\displaystyle\tabskip\z@{##}$&\global\@eqcnt\@ne
  \hfil$\displaystyle{{}##{}}$\hfil
  &\global\@eqcnt\tw@ $\displaystyle{##}$\hfil
  \tabskip\@centering&\llap{##}\tabskip\z@\cr}

\def\endeqnarray{\@@eqncr\egroup
      \global\advance\c@equation\m@ne$$\global\@ignoretrue}

\def\@yeqncr{\@ifnextchar [{\@xeqncr}{\@xeqncr[5pt]}}
\makeatother

\begin{document}
\bibliographystyle{tom}

\newtheorem{lemma}{Lemma}[section]
\newtheorem{thm}[lemma]{Theorem}
\newtheorem{cor}[lemma]{Corollary}
\newtheorem{prop}[lemma]{Proposition}

\theoremstyle{definition}

\newtheorem{ddefinition}[lemma]{Definition}
\newtheorem{remark}[lemma]{Remark}
\newtheorem{exam}[lemma]{Example}

\numberwithin{equation}{section}

\newcommand{\gota}{\gothic{a}}
\newcommand{\gotb}{\gothic{b}}
\newcommand{\gotc}{\gothic{c}}
\newcommand{\gote}{\gothic{e}}
\newcommand{\gotf}{\gothic{f}}
\newcommand{\gotg}{\gothic{g}}
\newcommand{\gothh}{\gothic{h}}
\newcommand{\gotk}{\gothic{k}}
\newcommand{\gotm}{\gothic{m}}
\newcommand{\gotn}{\gothic{n}}
\newcommand{\gotp}{\gothic{p}}
\newcommand{\gotq}{\gothic{q}}
\newcommand{\gotr}{\gothic{r}}
\newcommand{\gots}{\gothic{s}}
\newcommand{\gott}{\gothic{t}}
\newcommand{\gotu}{\gothic{u}}
\newcommand{\gotv}{\gothic{v}}
\newcommand{\gotw}{\gothic{w}}
\newcommand{\gotz}{\gothic{z}}
\newcommand{\gotA}{\gothic{A}}
\newcommand{\gotB}{\gothic{B}}
\newcommand{\gotC}{\gothic{C}}
\newcommand{\gotD}{\gothic{D}}
\newcommand{\gotG}{\gothic{G}}
\newcommand{\gotL}{\gothic{L}}
\newcommand{\gotS}{\gothic{S}}
\newcommand{\gotT}{\gothic{T}}

\newcounter{teller}
\renewcommand{\theteller}{(\alph{teller})}
\newenvironment{tabel}{\begin{list}%
{\rm  (\alph{teller})\hfill}{\usecounter{teller} \leftmargin=1.1cm
\labelwidth=1.1cm \labelsep=0cm \parsep=0cm}
                      }{\end{list}}

\newcounter{tellerr}
\renewcommand{\thetellerr}{(\roman{tellerr})}
\newenvironment{tabeleq}{\begin{list}%
{\rm  (\roman{tellerr})\hfill}{\usecounter{tellerr} \leftmargin=1.1cm
\labelwidth=1.1cm \labelsep=0cm \parsep=0cm}
                         }{\end{list}}

\newcounter{tellerrr}
\renewcommand{\thetellerrr}{(\Roman{tellerrr})}
\newenvironment{tabelR}{\begin{list}%
{\rm  (\Roman{tellerrr})\hfill}{\usecounter{tellerrr} \leftmargin=1.1cm
\labelwidth=1.1cm \labelsep=0cm \parsep=0cm}
                         }{\end{list}}

\newcommand{\vertspace}{\vskip10.0pt plus 4.0pt minus 6.0pt}

\newcounter{proofstep}
\newcommand{\nextstep}{\refstepcounter{proofstep}\vertspace \par
          \noindent{\bf Step \theproofstep} \hspace{5pt}}
\newcommand{\firststep}{\setcounter{proofstep}{0}\nextstep}

\newcommand{\Ni}{\mathds{N}}
\newcommand{\Qi}{\mathds{Q}}
\newcommand{\Ri}{\mathds{R}}
\newcommand{\Ci}{\mathds{C}}
\newcommand{\Ti}{\mathds{T}}
\newcommand{\Zi}{\mathds{Z}}
\newcommand{\Fi}{\mathds{F}}

\renewcommand{\proofname}{{\bf Proof}}

\newcommand{\simh}{{\stackrel{{\rm cap}}{\sim}}}
\newcommand{\ad}{{\mathop{\rm ad}}}
\newcommand{\Ad}{{\mathop{\rm Ad}}}
\newcommand{\alg}{{\mathop{\rm alg}}}
\newcommand{\clalg}{{\mathop{\overline{\rm alg}}}}
\newcommand{\Aut}{\mathop{\rm Aut}}
\newcommand{\arccot}{\mathop{\rm arccot}}
\newcommand{\capp}{{\mathop{\rm cap}}}
\newcommand{\rcapp}{{\mathop{\rm rcap}}}
\newcommand{\diam}{\mathop{\rm diam}}
\newcommand{\divv}{\mathop{\rm div}}
\newcommand{\dom}{\mathop{\rm dom}}
\newcommand{\codim}{\mathop{\rm codim}}
\newcommand{\RRe}{\mathop{\rm Re}}
\newcommand{\IIm}{\mathop{\rm Im}}
\newcommand{\tr}{{\mathop{\rm Tr \,}}}
\newcommand{\Tr}{{\mathop{\rm Tr \,}}}
\newcommand{\Vol}{{\mathop{\rm Vol}}}
\newcommand{\card}{{\mathop{\rm card}}}
\newcommand{\rank}{\mathop{\rm rank}}
\newcommand{\supp}{\mathop{\rm supp}}
\newcommand{\sgn}{\mathop{\rm sgn}}
\newcommand{\essinf}{\mathop{\rm ess\,inf}}
\newcommand{\esssup}{\mathop{\rm ess\,sup}}
\newcommand{\Int}{\mathop{\rm Int}}
\newcommand{\lcm}{\mathop{\rm lcm}}
\newcommand{\loc}{{\rm loc}}
\newcommand{\HS}{{\rm HS}}
\newcommand{\Trn}{{\rm Tr}}
\newcommand{\n}{{\rm N}}
\newcommand{\WOT}{{\rm WOT}}

\newcommand{\at}{@}

\newcommand{\spann}{\mathop{\rm span}}
\newcommand{\one}{\mathds{1}}

\newcommand{\checkQ}{\mbox{{\Large$\check{\mbox{\normalsize$\cQ$}}$}}}

\hyphenation{groups}
\hyphenation{unitary}

\newcommand{\ca}{{\cal A}}
\newcommand{\cb}{{\cal B}}
\newcommand{\cc}{{\cal C}}
\newcommand{\cd}{{\cal D}}
\newcommand{\ce}{{\cal E}}
\newcommand{\cf}{{\cal F}}
\newcommand{\ch}{{\cal H}}
\newcommand{\chs}{{\cal HS}}
\newcommand{\ci}{{\cal I}}
\newcommand{\ck}{{\cal K}}
\newcommand{\cl}{{\cal L}}
\newcommand{\cm}{{\cal M}}
\newcommand{\cn}{{\cal N}}
\newcommand{\co}{{\cal O}}
\newcommand{\cp}{{\cal P}}
\newcommand{\cQ}{\mathcal{Q}}
\newcommand{\cs}{{\cal S}}
\newcommand{\ct}{{\cal T}}
\newcommand{\cx}{{\cal X}}
\newcommand{\cy}{{\cal Y}}
\newcommand{\cz}{{\cal Z}}

\newcommand{\sF}{\mathscr{F}}

\thispagestyle{empty}

\vspace*{1cm}
\begin{center}
{\large\bf Construction of dynamical semigroups \\[10pt]
by a functional regularisation \`{a} la Kato} \\[5mm]
\large  A.F.M. ter Elst and Valentin A. Zagrebnov  \\[25pt]
\normalsize
In memory of Tosio Kato on the 100$^{\rm th}$ anniversary of his birthday

\end{center}


\vspace{5mm}

\begin{list}{}{\leftmargin=1.8cm \rightmargin=1.8cm \listparindent=10mm
   \parsep=0pt}
\item
\small
{\sc Abstract}.
A functional version of the Kato one-parametric regularisation for the construction of
a dynamical semigroup generator of a relative bound one perturbation is introduced.
It does not require that the minus generator of the unperturbed semigroup
is a positivity preserving operator.
The regularisation is illustrated by an example of a boson-number cut-off regularisation.
\end{list}

\let\thefootnote\relax\footnotetext{
\begin{tabular}{@{}l}
{\em Mathematics Subject Classification}. 47D05, 47A55, 81Q15, 47B65.\\
{\em Keywords}. Dynamical semigroup, perturbation,
Kato regularisation, positivity preserving.
\end{tabular}}

\tableofcontents

\section{Introduction} \label{Skg1}

The majority of papers concerning the construction of dynamical semigroups
use the
Kato regularisation method \cite{Kat8}, which goes back to 1954.
This method allows to
treat the case of positive unbounded perturbations with relative bound one and to construct
minimal Markov dynamical semigroups \cite{Dav16}, \cite{Dav17}.

Later a similar tool of the one-parametric regularisation allowed
Chernoff \cite{Chernoff} to prove the following abstract
result in a Banach space $\mathfrak{X}$ for a perturbation of a contraction
$C_0$-semigroup with generator
$A$ and domain $D(A)$.
Let $B$ be a dissipative operator with domain $D(B) \supset D(A)$
and relative bound \textit{one}, that is, there exists a $a \geq 0$ such that
\[
\|B x\|_{\mathfrak{X}} \leq \|A x\|_{\mathfrak{X}} + a \, \|x\|_{\mathfrak{X}}
,  \]
for all $x \in D(A)$.
If the domain $D(B^*)$ of the adjoint operator $B^*$ is dense
in the dual space $\mathfrak{X}'$, then the closure of the operator $A+B$
is the generator of a $C_0$-semigroup.
Note that the hypothesis on $B^*$ is superfluous if the Banach space $\mathfrak{X}$ is reflexive.
See also Okazawa \cite{Okazawa} Theorem~2 for the reflexive case.

The aim of the present paper is to put this tool into an abstract setting that covers the
Kato regularisation method as a particular case.
Our main result is a \textit{functional} version of the Kato regularisation for the construction of
generators when perturbations are with relative bound equal to one.

To produce an application of this result, we construct the generator of a Markov dynamical semigroup
for an open quantum system of bosons \cite{TamuraZagrebnov2} \cite{TamuraZagrebnov3}.
For this system the abstract Kato regularisation
corresponds to the particle-number cut-off in the Fock space.

\medskip

Let $\ch$ be a Hilbert space over $\Ci$.
Consider the Banach space of bounded operators $\cl(\ch)$ and the subspace $\gotC_1 = \gotC_1(\ch) $
of all trace-class operators.
Let $u,v \in \cl(\ch)$.
We say that an operator $u$ is {\bf positive}, in notation $u \geq 0$,
if $(u x,x)_\ch \geq 0$ for all $x \in \ch$.
We write $u \leq v$ if $v-u \geq 0$.
Let $\gotC_1^+ = \{ u \in \gotC_1 : u \geq 0 \} $.
Then $\gotC_1^+$ is a closed cone with trace-norm $\|u\|_{\gotC_1} = \Tr u$ for all $u \in \gotC_1^+$.

Let $\gotC_1^{\rm sa}$ be the Banach space over $\Ri$ of all self-adjoint operators of $\gotC_1$.
An operator $A \colon D(A) \to \gotC_1^{\rm sa}$ with
domain $D(A) \subset \gotC_1^{\rm sa}$ is called {\bf positivity preserving} if
$A u \geq 0$ for all
$u \in D(A)^+$, where $D(A)^+ := D(A) \cap \gotC_1^+$.
A semigroup $(S_t)_{t > 0}$ on $\gotC_1^{\rm sa}$ is called {\bf positivity preserving} if
the map $S_t$ is positivity preserving for all $t > 0$.

Let $D \subset \gotC_1^{\rm sa}$ be a subspace and let
$A, B \colon D \to \gotC_1^{\rm sa}$ be two maps.
Then we write $A \geq 0$ if $A$ is positivity preserving and we write $A \leq B$ if $B-A \geq 0$.
Obviously $\leq$ is a partial ordering on $\cl(\gotC_1^{\rm sa})$.

Let $-H$ be the generator of a positivity preserving contraction $C_0$-semigroup $(e^{-t H})_{t > 0}$
on $\gotC_1^{\rm sa}$.
Let $K \colon D(H) \to \gotC_1^{\rm sa}$ be a positivity preserving
operator and suppose that
\[
\Tr (Ku) \leq \Tr (H u)
\]
for all $u \in D(H)^+$.
We shall prove in Lemma~\ref{lkg231} that this implies that
the operator $K$ is $H$-bounded, but
with relative bound equal to one.
Hence it is an open problem whether operator $-(H - K)$ with $D(H-K)=D(H)$, or a closed extension of 
this operator, is again the generator of a $C_0$-semigroup.

Kato \cite{Kat8} solved this problem for Kolmogorov's evolution equations when the operator $H$
is a positivity preserving map.
To this end he proposed a
regularisation of the perturbation $K$ by replacing it by the one-parametric family
$(r K)_{r\in [0,1)}$
and by taking finally the limit $r \uparrow 1$.

The aim of the present paper is twofold.
First, we wish to consider a more general (functional)
regularisation \`{a} la Kato.
Secondly, we aim to remove the condition that the operator $H$ is
positivity preserving and merely assume the condition that $-H$ is
the generator of a
positivity preserving semigroup.
It is the positivity preserving of the quantum dynamical semigroup
which is indispensable in applications.
We require that the perturbation $K$ of $H$ admits the following type of regularisation.

\begin{ddefinition} \label{dkg102}
Let $(K_\alpha)_{\alpha \in J}$ be a net such that
$K_\alpha \colon D(H) \to \gotC_1^{\rm sa}$
for all $\alpha \in J$.
We call the family $(K_\alpha)_{\alpha \in J}$ a
{\bf functional regularisation} of the operator $K$ if the
following four conditions are valid.
\begin{tabelR}
\item \label{dkg102-1}
$K_\alpha$ is positivity preserving for all $\alpha \in J$.
\item \label{dkg102-2}
For all $\alpha \in J$ there exist $a_\alpha \in [0,\infty)$ and
$b_\alpha \in [0,1)$ such that
\[
\Tr(K_\alpha u) \leq a_\alpha \, \Tr u + b_\alpha \, \Tr(H u)
\]
for all $u \in D(H)^+$.
\item \label{dkg102-3}
$K_\alpha \leq K_\beta \leq K$ for all $\alpha,\beta \in J$ with $\alpha \leq \beta$.
\item \label{dkg102-4}
For all $u \in D(H)^+$ there exists a dense subspace $V$ of $\ch$ such that
$\lim_\alpha ((K_\alpha u)x,x)_\ch = ((K u)x,x)_\ch$
for all $x \in V$.
\end{tabelR}
\end{ddefinition}

As an example one can take $J = [0,1)$ and $K_r = r K$ for all $r \in J$, i.e.\ $a_\alpha = 0$ and
$b_\alpha = r$.
This was used in \cite{Kat8} under the additional assumption that
$H$ is positivity preserving.

The main theorem of this paper is the following.

\begin{thm} \label{tkg101}
Let $-H$ be the generator of a positivity preserving contraction $C_0$-semigroup
on $\gotC_1^{\rm sa}$.
Let $K \colon D(H) \to \gotC_1^{\rm sa}$ be a positivity preserving operator and suppose that
\begin{equation}\label{eqtk102}
\Tr (Ku) \leq \Tr (H u)
\end{equation}
for all $u \in D(H)^+$.
Let $(K_\alpha)_{\alpha \in J}$ be a functional regularisation of $K$.
Set $L_\alpha = H - K_\alpha$ for all $\alpha \in J$.
Then one has the following.
\begin{tabel}
\item \label{tkg101-1}
For all $\alpha \in J$ the operator $- L_\alpha$ is the generator of a positivity preserving
contraction $C_0$-semigroup $(T^\alpha_t)_{t > 0}$ on $\gotC_1^{\rm sa}$.
\item \label{tkg101-2}
If $t > 0$, then $\lim_\alpha T^\alpha_t u$ exists in $\gotC_1^{\rm sa}$ for all
$u \in \gotC_1^{\rm sa}$.
\end{tabel}
For all $t > 0$ define $T_t \colon \gotC_1^{\rm sa} \to \gotC_1^{\rm sa}$ by
$T_t u = \lim_\alpha T^\alpha_t u$.
\begin{tabel}
\setcounter{teller}{2}
\item \label{tkg101-3}
The family $(T_t)_{t > 0}$ is a positivity preserving contraction $C_0$-semigroup on
$\gotC_1^{\rm sa}$
for which the generator is an extension of the operator $-(H - K)$ with domain $D(H)$.
\end{tabel}
\end{thm}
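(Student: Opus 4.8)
The plan is to realise each $-L_\alpha$ as a generator through its resolvent, then to pass to the limit $\alpha\to\infty$ exploiting the monotonicity built into Definition~\ref{dkg102}, and finally to identify the generator of the limit semigroup; the genuinely delicate point is this last identification. For \ref{tkg101-1} I would first bound $\|K_\alpha(\mu+H)^{-1}\|_{\cl(\gotC_1^{\rm sa})}$ for large $\mu$: for $u\in\gotC_1^+$ and $w=(\mu+H)^{-1}u\in D(H)^+$ one has $\Tr(Hw)=\Tr u-\mu\Tr w$, so condition~\ref{dkg102-2} gives $\Tr(K_\alpha w)\le(a_\alpha-\mu b_\alpha)\Tr w+b_\alpha\Tr u\le b_\alpha\Tr u$ once $\mu\ge a_\alpha/b_\alpha$, and splitting a self-adjoint $u$ as $u_+-u_-$ (using that $K_\alpha(\mu+H)^{-1}$ is positivity preserving) yields $\|K_\alpha(\mu+H)^{-1}\|\le b_\alpha<1$. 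Hence $\mu+L_\alpha=(I-K_\alpha(\mu+H)^{-1})(\mu+H)$ is invertible with $(\mu+L_\alpha)^{-1}=(\mu+H)^{-1}\sum_{n\ge0}(K_\alpha(\mu+H)^{-1})^n$ positivity preserving, since $\gotC_1^+$ is closed. The key estimate is the contraction bound: for $u\ge0$ and $w=(\mu+L_\alpha)^{-1}u\in D(H)^+$, the identity $\mu\Tr w+\Tr(Hw)-\Tr(K_\alpha w)=\Tr u$ together with $\Tr(K_\alpha w)\le\Tr(Kw)\le\Tr(Hw)$ (from \ref{dkg102-3} and \eqref{eqtk102}) forces $\Tr w\le\mu^{-1}\Tr u$, so $\|(\mu+L_\alpha)^{-1}\|\le1/\mu$ on $\gotC_1^{\rm sa}$. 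As $L_\alpha$ is closed and densely defined (the relative-boundedness counterpart of Lemma~\ref{lkg231}), Hille--Yosida produces the contraction $C_0$-semigroup $(T^\alpha_t)$, whose positivity preservation follows from $T^\alpha_t u=\lim_n\bigl(\frac nt(\frac nt+L_\alpha)^{-1}\bigr)^n u$ and closedness of the cone.

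For \ref{tkg101-2} the engine is monotonicity. For $\lambda>0$ and $\alpha\le\beta$ the resolvent identity $(\lambda+L_\beta)^{-1}-(\lambda+L_\alpha)^{-1}=(\lambda+L_\beta)^{-1}(K_\beta-K_\alpha)(\lambda+L_\alpha)^{-1}$ and condition~\ref{dkg102-3} give $(\lambda+L_\alpha)^{-1}\le(\lambda+L_\beta)^{-1}$, and the exponential formula propagates this to $T^\alpha_t\le T^\beta_t$. Thus for fixed $t>0$ and $u\in\gotC_1^+$ the net $(T^\alpha_t u)_\alpha$ increases in $\gotC_1^+$ with $\Tr(T^\alpha_t u)\le\Tr u$, so the real net $(\Tr(T^\alpha_t u))_\alpha$ increases and is bounded, hence is Cauchy; since $\|T^\beta_t u-T^\alpha_t u\|_{\gotC_1}=\Tr(T^\beta_t u)-\Tr(T^\alpha_t u)$ for $\alpha\le\beta$, the net $(T^\alpha_t u)_\alpha$ is Cauchy in the Banach space $\gotC_1$ and converges; splitting $u=u_+-u_-$ gives \ref{tkg101-2}. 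Passing to the limit, closedness of $\gotC_1^+$ and continuity of $\Tr$ show $T_t$ is a positivity preserving contraction, and the semigroup law $T_{t+s}=T_tT_s$ follows from $T^\alpha_{t+s}=T^\alpha_tT^\alpha_s$ by a three-$\varepsilon$ argument using $\|T^\alpha_t\|\le1$ and \ref{tkg101-2}.

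For the $C_0$ property I would use two remarks. First, $\spann D(H)^+=D(H)$: for $u\in D(H)$, decompose $(\mu+H)u=v_+-v_-$ with $v_\pm\ge0$ and apply $(\mu+H)^{-1}$. Second, for $w\in D(H)^+$ one has the uniform bound $\|K_\alpha w\|_{\gotC_1}=\Tr(K_\alpha w)\le\Tr(Hw)\le\|Hw\|_{\gotC_1}$, so by Duhamel $\|T^\alpha_t w-w\|\le\int_0^t\|T^\alpha_s(Hw-K_\alpha w)\|\,ds\le 2t\|Hw\|$, whence $\|T_tw-w\|\le2t\|Hw\|\to0$ on $D(H)^+$, hence on $D(H)$ by the first remark, hence on all of $\gotC_1^{\rm sa}$ by density and $\|T_t\|\le1$; so $(T_t)_{t>0}$ is a positivity preserving contraction $C_0$-semigroup, say with generator $-G$. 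The same monotonicity argument as in \ref{tkg101-2} shows $(\lambda+L_\alpha)^{-1}u\to R_\lambda u$ for every $u$, and dominated convergence in the Laplace representation $(\lambda+L_\alpha)^{-1}u=\int_0^\infty e^{-\lambda t}T^\alpha_t u\,dt$ identifies $R_\lambda=(\lambda+G)^{-1}$.

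The main obstacle is the last step: showing $-G$ extends $-(H-K)$ requires $\lim_\alpha\|(K-K_\alpha)u\|_{\gotC_1}=0$ for $u\in D(H)$, whereas condition~\ref{dkg102-4} only gives a weak-type convergence on positive elements. For $u\in D(H)^+$ the net $((K-K_\alpha)u)_\alpha$ decreases in the closed cone $\gotC_1^+$ and is bounded below, so by the Cauchy argument of \ref{tkg101-2} it converges in $\gotC_1$ to some $w_\infty\ge0$; but \ref{dkg102-4} forces $(w_\infty x,x)_\ch=0$ on a dense subspace of $\ch$, so $w_\infty=0$, and $\spann D(H)^+=D(H)$ extends this to all $u\in D(H)$. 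Finally, for $u\in D(H)=D(L_\alpha)$ write $u=(\lambda+L_\alpha)^{-1}(\lambda+H-K)u+(\lambda+L_\alpha)^{-1}(K-K_\alpha)u$; letting $\alpha\to\infty$, the first term tends to $R_\lambda(\lambda+H-K)u$ while the second is at most $\lambda^{-1}\|(K-K_\alpha)u\|\to0$, so $u=(\lambda+G)^{-1}(\lambda+H-K)u$. Hence $u\in D(G)$ and $Gu=(H-K)u$, i.e.\ $-G$ extends $-(H-K)$ on $D(H)$, which completes \ref{tkg101-3}.
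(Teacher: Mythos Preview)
Your proof is correct and follows the same overall architecture as the paper's Theorem~\ref{tkg203}: realise each $-L_\alpha$ as a generator via a Neumann series for the resolvent, establish monotonicity $T^\alpha_t\le T^\beta_t$, pass to the limit using order-completeness of $\gotC_1^+$, and finally identify the generator. Two steps differ in a way worth noting.

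For strong continuity at $t=0$ you use Duhamel to get the uniform bound $\|T^\alpha_t w-w\|_{\gotC_1}\le 2t\|Hw\|_{\gotC_1}$ on $D(H)^+$; the paper instead compares $T^\alpha_t$ with the unperturbed semigroup $S_t=e^{-tH}$ via $\|T^\alpha_t u-S_t u\|_{\gotC_1}=\Tr((T^\alpha_t-S_t)u)\le\Tr((I-S_t)u)$. Both are short; yours needs the observation $\spann D(H)^+=D(H)$, which you supply.

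For the extension step $-G\supset-(H-K)$ your route is genuinely more direct than the paper's. The paper first proves a Neumann representation $\sum_{n\ge0}(\lambda+H)^{-1}(K(\lambda+H)^{-1})^n u=(\lambda+L)^{-1}u$ (Lemma~\ref{lkg246}), which requires the auxiliary Lemma~\ref{lkg245} and a double-limit argument, and then uses the recursion $R_N=(I+H)^{-1}+R_{N-1}K(I+H)^{-1}$. You bypass all of this: you show $K_\alpha u\to Ku$ in $\gotC_1$ for $u\in D(H)^+$ (monotone net in $\gotC_1^+$ bounded above, hence convergent, with condition~\ref{dkg102-4} identifying the limit), extend to $D(H)$ via $\spann D(H)^+=D(H)$, and then pass to the limit in $u=(\lambda+L_\alpha)^{-1}(\lambda+H-K)u+(\lambda+L_\alpha)^{-1}(K-K_\alpha)u$. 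This is shorter; the trade-off is that the paper's argument yields the Neumann series for $(\lambda+L)^{-1}$ as a by-product, which is of independent interest.

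One small remark: your phrase ``dominated convergence'' for identifying $R_\lambda=(\lambda+G)^{-1}$ is slightly loose for nets. What actually works (and what the paper does, via a Dini argument) is scalar \emph{monotone} convergence applied to $t\mapsto e^{-\lambda t}((T^\alpha_t u)x,x)_\ch$ for $u\in\gotC_1^+$ and $x\in\ch$, which is valid for nets; this pins down $R_\lambda u$ weakly and hence in $\gotC_1$. No gap, just a wording issue.
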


As a corollary we obtain the regularisation theorem of Kato invented in
\cite{Kat8} and which was extended
to dynamical semigroups with unbounded generators by Davies in \cite{Dav17}.

For completeness we recall that the concept of the dynamical semigroups
was motivated by mathematical
studies of the states dynamics of quantum open systems, see \cite{Dav16}.
In a certain approximation it can be
described on an abstract (Banach) space of states by a $C_0$-semigroup of
positive preserving maps.
These semigroups are often called quantum semigroups if in addition
the Kossakowski--Lindblad--Davies Ansatz (see \cite{AttalJoyePillet2}) is satisfied.

In this paper a {\bf dynamical semigroup} is defined to be a positivity preserving
contraction $C_0$-semigroup on
the Banach space $\gotC_1^{\rm sa}$.
The abstract space-states which we consider in this paper consist of self-adjoint
trace-class operators over a complex Hilbert space $\ch$.
In Section~\ref{Skg3} this Hilbert space is the boson Fock space $\sF$.
A semigroup $(T_t)_{t > 0}$ on $\gotC_1^{\rm sa}$ is called {\bf trace preserving}
if $\Tr (T_t u) = \Tr u$ for all $u \in \gotC_1^{\rm sa}$ and $t > 0$.
Then a {\bf Markov dynamical semigroup} is a dynamical semigroup which is
trace preserving.

We prove Theorem~\ref{tkg101} in Section~\ref{Skg2}.
It turns out that the semigroup $(T_t)_{t > 0}$
constructed in Theorem~\ref{tkg101} is minimal in the sense of Kato~\cite{Kat8}.
We conclude Section~\ref{Skg2} with sufficient conditions for $(T_t)_{t > 0}$
being a Markov dynamical semigroup.

In Section~\ref{Skg3} we present an example where the functional regularisation
of the operator $K$ is a particle-number cut-off in the Fock space $\sF$.
We show that the semigroup which is constructed by this regularisation
method is a Markov dynamical semigroup and that it is minimal.
Moreover, the operator $H$ is not positivity preserving.

\section{The regularisation theorem} \label{Skg2}

We start with a lemma concerning bounded positivity preserving operators on
$\gotC_1^{\rm sa}$.

\begin{lemma} \label{lkg200.5}
\mbox{}
\begin{tabel}
\item \label{lkg200.5-1}
Let $u \in \gotC_1^{\rm sa}$.
Then there are unique $v,w \in \gotC_1^+$ such that
$u = v - w$ and $|u| = v + w$, where $|u|$ is the absolute value of $u$.
\item \label{lkg200.5-1.5}
Let $A \in \cl(\gotC_1^{\rm sa})$ be positivity preserving.
Then $\|A u\|_{\gotC_1} \leq \| A |u| \, \|_{\gotC_1}$
for all $u \in \gotC_1^{\rm sa}$.
\item \label{lkg200.5-2}
Let $A,B \in \cl(\gotC_1^{\rm sa})$ be positivity preserving.
Moreover, suppose that $\Tr Au \leq \Tr Bu$ for all $u \in \gotC_1^+$.
Then $\|A\| \leq \|B\|$.
\item \label{lkg200.5-3}
Let $A \in \cl(\gotC_1^{\rm sa})$ be positivity preserving and let $M \geq 0$.
Suppose that
$\Tr(A u) \leq M \, \Tr u$ for all $u \in \gotC_1^+$.
Then $\|A\| \leq M$.
\item \label{lkg200.5-6}
Let $A,B \in \cl(\gotC_1^{\rm sa})$ be positivity preserving and suppose that
$A \leq B$.
Then $A^n \leq B^n$ for all $n \in \Ni$.
\item \label{lkg200.5-5}
Let $(u_\alpha)_{\alpha \in J}$ be a net in $\gotC_1^+$.
Suppose that $u_\alpha \leq u_\beta$ for all $\alpha,\beta \in J$ with
$\alpha \leq \beta$.
Moreover, suppose that $\sup \{ \Tr u_\alpha : \alpha \in J \} < \infty$.
Then the net $(u_\alpha)_{\alpha \in J}$ is convergent in $\gotC_1$.
\end{tabel}
\end{lemma}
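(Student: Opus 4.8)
The six statements are proved in the order given, each feeding into the next. For (a), I would use the spectral decomposition of the self-adjoint trace-class operator $u$: writing $u = u_+ - u_-$ with $u_\pm \geq 0$ the positive and negative parts obtained by applying $x \mapsto x_\pm$ to the eigenvalues, one has $u_\pm \in \gotC_1^+$ since $u_\pm \leq |u|$ and $|u| \in \gotC_1$, and $|u| = u_+ + u_-$. Uniqueness follows because any decomposition $u = v - w$ with $v,w \geq 0$ and $v+w = |u|$ forces $v = \tfrac12(|u| + u)$, $w = \tfrac12(|u| - u)$. For (b), with $u = v - w$ as in (a), positivity preservation gives $Av, Aw \in \gotC_1^+$, so
\[
\|Au\|_{\gotC_1} = \|Av - Aw\|_{\gotC_1} \leq \|Av\|_{\gotC_1} + \|Aw\|_{\gotC_1} = \Tr(Av) + \Tr(Aw) = \Tr(A|u|) = \|A|u|\,\|_{\gotC_1},
\]
using that the trace of a positive trace-class operator equals its trace norm and that trace is linear.

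For (c), the hypothesis $\Tr(Au) \leq \Tr(Bu)$ on $\gotC_1^+$ together with positivity of $Bu - Au$ gives $\|Au\|_{\gotC_1} = \Tr(Au) \leq \Tr(Bu) = \|Bu\|_{\gotC_1}$ for $u \in \gotC_1^+$; then for general $u \in \gotC_1^{\rm sa}$ apply (b) and this inequality to get $\|Au\|_{\gotC_1} \leq \|A|u|\,\|_{\gotC_1} \leq \|B|u|\,\|_{\gotC_1} \leq \|B\|\,\||u|\,\|_{\gotC_1} = \|B\|\,\|u\|_{\gotC_1}$, whence $\|A\| \leq \|B\|$. Part (d) is the special case $B = M\,\mathrm{id}$, whose norm is $M$ (or one can repeat the argument directly). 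Part (e) follows by induction: from $0 \leq A \leq B$ and positivity preservation, $A^{n+1}u = A(A^n u) \leq A(B^n u)$ for $u \in \gotC_1^+$ (applying $A$ to the positive operator $B^n u - A^n u$), and $A(B^n u) \leq B(B^n u) = B^{n+1} u$ (applying the inequality $A \leq B$ to the positive operator $B^n u$), so $A^{n+1} \leq B^{n+1}$; the base case $n=1$ is the hypothesis.

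The main obstacle is part (f), the monotone-convergence statement, which is the only genuinely analytic point. The plan is to mimic the classical proof that a bounded increasing sequence of positive trace-class operators converges in trace norm. Fix $\varepsilon > 0$. Since $\Tr u_\alpha$ is increasing and bounded, it is Cauchy: pick $\alpha_0$ with $\Tr u_\beta - \Tr u_\alpha < \varepsilon$ for all $\beta \geq \alpha \geq \alpha_0$. For such $\alpha,\beta$ the operator $u_\beta - u_\alpha$ lies in $\gotC_1^+$, so $\|u_\beta - u_\alpha\|_{\gotC_1} = \Tr(u_\beta - u_\alpha) = \Tr u_\beta - \Tr u_\alpha < \varepsilon$. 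Hence $(u_\alpha)$ is Cauchy in the Banach space $\gotC_1$ and therefore converges. (One should note the net is eventually decreasing-free of pathologies because the index set is directed and the trace values are monotone along the order; the estimate only uses comparability of $\alpha$ and $\beta$, which the directedness supplies for a cofinal tail.) I expect the subtlety to be purely in phrasing the Cauchy argument correctly for a net rather than a sequence, which is routine once one observes that $\Tr(\cdot)$ converts the order-monotonicity and the uniform bound into the ordinary Cauchy property of the real net $(\Tr u_\alpha)$.
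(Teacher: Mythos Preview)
Your proofs of (a)--(e) are correct and essentially match the paper's, with only cosmetic differences (the paper arranges the induction in (e) as $B^{n+1}u \geq A^n B u = A^n(B-A)u + A^{n+1}u \geq A^{n+1}u$, which is equivalent to your two-step chain). One small slip: in (c) you write ``together with positivity of $Bu-Au$'', but the hypothesis gives only $\Tr(Au)\leq\Tr(Bu)$, not $Au\leq Bu$. Fortunately your argument never actually uses operator positivity of $Bu-Au$; the equalities $\|Au\|_{\gotC_1}=\Tr(Au)$ and $\|Bu\|_{\gotC_1}=\Tr(Bu)$ for $u\in\gotC_1^+$ only need that $A$ and $B$ are positivity preserving, so the chain $\|Au\|_{\gotC_1}\leq\|A|u|\,\|_{\gotC_1}=\Tr(A|u|)\leq\Tr(B|u|)=\|B|u|\,\|_{\gotC_1}\leq\|B\|\,\|u\|_{\gotC_1}$ is fine as written.

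For (f) your route is genuinely different from the paper's and somewhat cleaner. The paper first constructs the limit $u$ as a weak-operator limit: monotonicity gives convergence of $(u_\alpha x,x)_\ch$ for each $x$, polarisation defines a bounded positive $u$, one checks $u\in\gotC_1^+$ with $\Tr u=\lim_\alpha\Tr u_\alpha$, and finally $\|u-u_\alpha\|_{\gotC_1}=\Tr(u-u_\alpha)\to 0$. Your argument bypasses the explicit construction by observing directly that the net is Cauchy in $\gotC_1$: the monotone bounded real net $(\Tr u_\alpha)$ converges, so for comparable $\alpha\leq\beta$ beyond some $\alpha_0$ one has $\|u_\beta-u_\alpha\|_{\gotC_1}=\Tr(u_\beta-u_\alpha)<\varepsilon$, and directedness handles incomparable indices via a common upper bound. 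This is shorter and uses only completeness of $\gotC_1$; the paper's approach has the side benefit of identifying the limit as the weak-operator limit, but that is not used elsewhere in the paper.
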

\begin{proof}
Statement~\ref{lkg200.5-1} follows from the spectral representation
of the self-adjoint operator $u\in \gotC_1^{\rm sa}$.

\ref{lkg200.5-1.5}.
Let $u \in \gotC_1^{\rm sa}$.
Let $v,w \in \gotC_1^+$ be as in Statement~\ref{lkg200.5-1}.
Then $Av, Aw \in \gotC_1^+$.
So
\[
\|A u\|_{\gotC_1}
= \|A v - A w\|_{\gotC_1}
\leq \|A v\|_{\gotC_1} + \|A w\|_{\gotC_1}
= \Tr A v + \Tr A w
= \Tr A |u|
=  \| A |u| \, \|_{\gotC_1}
.  \]

\ref{lkg200.5-2}
Let $u \in \gotC_1^{\rm sa}$.
Note that $|u| \in \gotC_1^+$ and $\Tr (B-A)|u| \geq 0$ by assumption.
Therefore \ref{lkg200.5-1.5} gives
\[
\|A u\|_{\gotC_1}
\leq \Tr A |u| + \Tr (B-A)|u|
= \Tr B |u|
= \| B |u| \, \|_{\gotC_1}
\leq \|B\| \, \| \, |u| \, \|_{\gotC_1}
= \|B\| \, \|u\|_{\gotC_1}
\]
and the statement follows.

\ref{lkg200.5-3}.
Choose $B = M \, I$ and use Statement~\ref{lkg200.5-2}.

\ref{lkg200.5-6}.
The proof is by induction.
Let $n \in \Ni$ and suppose that $A^n \leq B^n$.
Then
\[
B^{n+1} u
\geq A^n \, B u
= A^n \, (B - A) u + A^{n+1} u
\geq A^{n+1} u
\]
for all $u \in \gotC_1^+$, since $A^n$ is positivity preserving and
$(B-A) u \geq 0$.

\ref{lkg200.5-5}.
Let $M = \sup \{ \Tr u_\alpha : \alpha \in J \} < \infty$.
Let $x \in \ch$. Then
$\alpha \mapsto (u_\alpha x, x)_\ch$ is increasing and bounded above by
$M \, \|x\|_\ch^2$.
So $\lim_\alpha (u_\alpha x, x)_\ch$ exists.
By the polarisation identity $\lim_\alpha (u_\alpha x, y)_\ch$ exists
for all $x,y \in \ch$.
Define the operator $u \colon \ch \to \ch$ such that
\[
(u x, y)_\ch
= \lim_\alpha (u_\alpha x, y)_\ch
\]
for all $x,y \in \ch$.
It is easy to see that $u$ is symmetric and is an element of $\cl(\ch)$.
Clearly
$(u x, x)_\ch = \lim_\alpha (u_\alpha x, x)_\ch \geq 0$ for all $x \in \ch$.
So $u \geq 0$.

Obviously $0 \leq \Tr u_\alpha \leq \Tr u_\beta \leq M$ for all $\alpha,\beta \in J$
with $\alpha \leq \beta$.
So $\lim_\alpha \Tr u_\alpha \leq M$.
Let $N \in \Ni$ and let $ \{ e_n : n \in \{ 1,\ldots,N \} \} $ be an orthonormal set in $\ch$.
Then
\[
\sum_{n=1}^N (u e_n,e_n)_\ch
= \sum_{n=1}^N \lim_\alpha (u_\alpha  e_n,e_n)_\ch
= \lim_\alpha \sum_{n=1}^N (u_\alpha  e_n,e_n)_\ch
\leq \lim_\alpha \Tr u_\alpha
\leq M
.  \]
So $u \in \gotC_1^+$ and
$\Tr u \leq \lim_\alpha \Tr u_\alpha$.
Clearly $\Tr u_\alpha \leq \Tr u$ for all $\alpha \in J$ and
hence $\Tr u = \lim_\alpha \Tr u_\alpha$.
Since $u - u_\alpha \geq 0$ for all $\alpha \in J$, it follows
that $\lim_\alpha \|u - u_\alpha\|_{\gotC_1} = \lim_\alpha \Tr (u - u_\alpha) = 0$.
Therefore $\lim_\alpha u_\alpha = u$ in $\gotC_1$.
\end{proof}

A trace inequality together with positivity preserving gives
$H$-boundedness of a perturbation.

\begin{lemma} \label{lkg231}
Let $-H$ be the generator of a positivity preserving contraction $C_0$-semigroup
on $\gotC_1^{\rm sa}$.
Let $K \colon D(H) \to \gotC_1^{\rm sa}$ be a positivity preserving operator.
Suppose that $\Tr (K u) \leq \Tr (Hu)$ for all $u \in D(H)^+$.
Then $K \, (\lambda \, I + H)^{-1}$ is bounded and
$\|K \, (\lambda \, I + H)^{-1}\| \leq 1$ for all $\lambda > 0$.
Moreover,
$\|K u\|_{\gotC_1} \leq \|H u\|_{\gotC_1}$
for all $u \in D(H)$ and in particular the operator $K$ is
$H$-bounded with relative bound one.
\end{lemma}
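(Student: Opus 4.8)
The plan is to exploit the positivity preserving hypothesis to pass from the scalar trace inequality to a norm inequality, using the bounded-operator results of Lemma~\ref{lkg200.5}. First I would fix $\lambda > 0$ and recall that, since $-H$ generates a contraction $C_0$-semigroup, the resolvent $R_\lambda := (\lambda\,I + H)^{-1}$ is a bounded everywhere-defined operator on $\gotC_1^{\rm sa}$ with $\|R_\lambda\| \leq \lambda^{-1}$. The key additional fact I need is that $R_\lambda$ is itself positivity preserving: this follows from the Laplace transform representation $R_\lambda u = \int_0^\infty e^{-\lambda t}\, e^{-tH} u \,dt$, together with the assumption that $e^{-tH}$ is positivity preserving for every $t > 0$ and the fact that $\gotC_1^+$ is a closed cone (an increasing limit, resp. integral, of positive elements is positive). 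Hence the composition $K\,R_\lambda$ maps $\gotC_1^{\rm sa}$ into $\gotC_1^{\rm sa}$ and, being a composition of two positivity preserving maps, is positivity preserving and everywhere defined on $\gotC_1^{\rm sa}$.

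Next I would verify the trace bound for $K R_\lambda$ on the positive cone. Let $u \in \gotC_1^+$. Then $v := R_\lambda u \in D(H)^+$, because $v \in D(H)$ (range of the resolvent) and $v \geq 0$ by the previous paragraph. Applying the hypothesis $\Tr(Kv) \leq \Tr(Hv)$ and writing $Hv = u - \lambda v$ gives
\[
\Tr\big(K R_\lambda u\big) \;=\; \Tr(Kv) \;\leq\; \Tr(Hv) \;=\; \Tr u - \lambda\,\Tr v \;\leq\; \Tr u ,
\]
since $\Tr v \geq 0$. Thus $K R_\lambda$ is a bounded positivity preserving operator with $\Tr(K R_\lambda u) \leq \Tr u = 1 \cdot \Tr u$ for all $u \in \gotC_1^+$, and Lemma~\ref{lkg200.5}\ref{lkg200.5-3} (with $M = 1$) yields $\|K R_\lambda\| \leq 1$. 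At this point $K R_\lambda$ is a priori only defined as a map on the real Banach space $\gotC_1^{\rm sa}$; but $K$ and $R_\lambda$ are $\Ci$-linear on the underlying trace-class operators, so the same bound holds there, giving the first assertion.

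For the final assertion, fix $u \in D(H)$ and set $w := (\lambda\,I + H)u$, so that $u = R_\lambda w$ and $Ku = K R_\lambda w$. Then $\|Ku\|_{\gotC_1} = \|K R_\lambda w\|_{\gotC_1} \leq \|w\|_{\gotC_1} = \|\lambda u + Hu\|_{\gotC_1} \leq \lambda\,\|u\|_{\gotC_1} + \|Hu\|_{\gotC_1}$. Since $\lambda > 0$ is arbitrary, letting $\lambda \downarrow 0$ gives $\|Ku\|_{\gotC_1} \leq \|Hu\|_{\gotC_1}$, hence in particular $K$ is $H$-bounded with relative bound one. The only delicate point I anticipate is justifying that the resolvent is positivity preserving, i.e.\ that the vector-valued integral $\int_0^\infty e^{-\lambda t} e^{-tH}u\,dt$ has positive value when $u \geq 0$; this is handled by approximating the integral by Riemann sums of positive elements and invoking closedness of the cone $\gotC_1^+$ in trace norm, together with the strong continuity and contractivity of the semigroup. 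Everything else is a direct application of the already-established parts of Lemma~\ref{lkg200.5}.
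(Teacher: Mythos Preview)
Your proof is correct and follows essentially the same route as the paper: positivity of the resolvent via the Laplace transform, composition to get $K(\lambda I+H)^{-1}$ positivity preserving, the trace estimate $\Tr K(\lambda I+H)^{-1}u \leq \Tr u$, then Lemma~\ref{lkg200.5}\ref{lkg200.5-3} for the norm bound, and finally letting $\lambda\downarrow 0$. The only small difference is that the paper explicitly invokes \cite{Dav16} Lemma~2.1 to justify that the everywhere-defined positivity preserving map $K(\lambda I+H)^{-1}$ is bounded \emph{before} applying Lemma~\ref{lkg200.5}\ref{lkg200.5-3} (whose hypothesis is $A\in\cl(\gotC_1^{\rm sa})$), whereas you assert boundedness without a separate citation; this is a cosmetic gap, since the proof of Lemma~\ref{lkg200.5}\ref{lkg200.5-3} does not actually use boundedness of $A$ and yields it as a conclusion.
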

\begin{proof}
Let $\lambda > 0$.
Then the resolvent
\[
(\lambda \, I + H)^{-1} = \int_0^\infty e^{-\lambda t} \, S_t \, dt
\]
is a positivity preserving bounded operator on $\gotC_1^{\rm sa}$.
Therefore by composition the operator
$K \, (\lambda \, I + H)^{-1} \colon \gotC_1^{\rm sa} \to \gotC_1^{\rm sa}$
is positivity preserving, hence bounded by \cite{Dav16} Lemma~2.1.
Moreover,
\[
\Tr K \, (\lambda \, I + H)^{-1} u
\leq \Tr H \, (\lambda \, I + H)^{-1} u
= \Tr u - \lambda \Tr (\lambda \, I + H)^{-1} u
\leq \Tr u
\]
for all $u \in \gotC_1^+$.
So $\|K \, (\lambda \, I + H)^{-1}\| \leq 1$ by
Lemma~\ref{lkg200.5}\ref{lkg200.5-3}.
Therefore
$\|K u\|_{\gotC_1}
\leq \|(\lambda \, I + H) u\|_{\gotC_1}
\leq \lambda \, \|u\|_{\gotC_1} + \|H u\|_{\gotC_1}$
for all $u \in D(H)$ and the lemma follows.
\end{proof}

Inequalities between positivity preserving contraction $C_0$-semigroups
are equivalent to inequalities between the resolvents.

\begin{lemma} \label{lkg234}
Let $(S_t)_{t > 0}$ and $(T_t)_{t > 0}$ be two positivity preserving
bounded $C_0$-semigroups with generators $-H$ and $-L$ respectively.
Then the following are equivalent.
\begin{tabeleq}
\item \label{lkg234-1}
$S_t \leq T_t$ for all $t > 0$.
\item \label{lkg234-2}
$(\lambda \, I + H)^{-1} \leq (\lambda \, I + L)^{-1}$ for all
$\lambda > 0$.
\end{tabeleq}
If, in addition, $D(H) \subset D(L)$, then {\rm \ref{lkg234-1}} is
also equivalent to
\begin{tabeleq}
\setcounter{tellerr}{2}
\item \label{lkg234-3}
The operator $H - L$ is positivity preserving.
\end{tabeleq}
\end{lemma}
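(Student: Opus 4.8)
\emph{Proof plan.} The plan is to establish the equivalence \ref{lkg234-1}$\Leftrightarrow$\ref{lkg234-2} first, by passing back and forth between a $C_0$-semigroup and its resolvent through the two standard Laplace-type formulas, and then, under the hypothesis $D(H)\subset D(L)$, to close the cycle $\ref{lkg234-3}\Rightarrow\ref{lkg234-2}\Rightarrow\ref{lkg234-1}\Rightarrow\ref{lkg234-3}$. Throughout I will use without further comment that $\gotC_1^+$ is norm-closed, so that Bochner integrals and limits of nets of positive operators are again positive, and that a bounded positivity preserving linear operator is monotone for the ordering $\leq$ (apply it to a difference).

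For \ref{lkg234-1}$\Rightarrow$\ref{lkg234-2} I would use that, since the semigroups are bounded, for every $\lambda>0$
\[
(\lambda\,I+H)^{-1}=\int_0^\infty e^{-\lambda t}\,S_t\,dt,
\qquad
(\lambda\,I+L)^{-1}=\int_0^\infty e^{-\lambda t}\,T_t\,dt .
\]
If $u\in\gotC_1^+$, then $e^{-\lambda t}S_tu\leq e^{-\lambda t}T_tu$ for all $t>0$ by \ref{lkg234-1}, and integrating gives \ref{lkg234-2}. For \ref{lkg234-2}$\Rightarrow$\ref{lkg234-1} I would invoke Euler's exponential formula
\[
S_tu=\lim_{n\to\infty}\Big(\tfrac nt\Big)^{\!n}\Big(\tfrac nt\,I+H\Big)^{-n}u
\]
and its analogue for $T_t$. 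The resolvents are positivity preserving by the integral formula above, and \ref{lkg234-2} together with Lemma~\ref{lkg200.5}\ref{lkg200.5-6} upgrades $(\lambda\,I+H)^{-1}\leq(\lambda\,I+L)^{-1}$ to $(\lambda\,I+H)^{-n}\leq(\lambda\,I+L)^{-n}$ for all $n\in\Ni$. Taking $\lambda=n/t$, applying to $u\in\gotC_1^+$, multiplying by the positive scalar $(n/t)^n$, and letting $n\to\infty$ yields $S_tu\leq T_tu$.

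Now assume $D(H)\subset D(L)$. For \ref{lkg234-3}$\Rightarrow$\ref{lkg234-2}, fix $\lambda>0$ and $u\in\gotC_1^+$ and put $v=(\lambda\,I+H)^{-1}u$, which lies in $D(H)^+$ since the resolvent is positivity preserving. Then, using $D(H)\subset D(L)$,
\[
(\lambda\,I+L)v=(\lambda\,I+H)v-(H-L)v=u-(H-L)v\leq u ,
\]
because $(H-L)v\geq0$ by \ref{lkg234-3}; applying the positivity preserving operator $(\lambda\,I+L)^{-1}$ gives $v\leq(\lambda\,I+L)^{-1}u$, which is \ref{lkg234-2}. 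For \ref{lkg234-1}$\Rightarrow$\ref{lkg234-3}, let $w\in D(H)^+\subset D(L)$ and differentiate the two semigroups at $t=0$: since $-H$ and $-L$ are the generators,
\[
(H-L)w=\lim_{t\downarrow0}\tfrac1t\big((w-S_tw)-(w-T_tw)\big)=\lim_{t\downarrow0}\tfrac1t\,(T_t-S_t)w .
\]
Each element of this net lies in $\gotC_1^+$ because $T_t-S_t$ is positivity preserving and $w\geq0$, so its limit does too, and hence $H-L$ is positivity preserving.

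None of the steps is deep; the points that need a little care are the justification of the resolvent representation and of Euler's formula for a merely bounded (not necessarily contractive) $C_0$-semigroup, and, at each limit or integral, the passage of positivity to the limit, which rests on $\gotC_1^+$ being closed. I expect the bookkeeping in \ref{lkg234-2}$\Rightarrow$\ref{lkg234-1} — combining the exponential formula with the $n$-th power inequality of Lemma~\ref{lkg200.5}\ref{lkg200.5-6} — to be the most delicate point, but it is routine.
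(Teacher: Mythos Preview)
Your proof is correct and follows essentially the same route as the paper: Laplace transform for \ref{lkg234-1}$\Rightarrow$\ref{lkg234-2}, Euler's formula combined with Lemma~\ref{lkg200.5}\ref{lkg200.5-6} for \ref{lkg234-2}$\Rightarrow$\ref{lkg234-1}, the second resolvent identity (your computation with $v=(\lambda I+H)^{-1}u$ is just that identity unpacked) for \ref{lkg234-3}$\Rightarrow$\ref{lkg234-2}, and differentiation of $t\mapsto(T_t-S_t)w$ at $t=0$ for \ref{lkg234-1}$\Rightarrow$\ref{lkg234-3}. The only cosmetic difference is that in the last implication the paper verifies $(((H-L)w)x,x)_\ch\geq0$ for each $x\in\ch$, whereas you pass directly to the $\gotC_1$-limit and use that $\gotC_1^+$ is closed; both arguments are valid.
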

\begin{proof}
`\ref{lkg234-1}$\Rightarrow$\ref{lkg234-2}'.
This follows from a Laplace transform.

`\ref{lkg234-2}$\Rightarrow$\ref{lkg234-1}'.
It follows from Lemma~\ref{lkg200.5}\ref{lkg200.5-6} that
$(\lambda \, I + H)^{-n} \leq (\lambda \, I + L)^{-n}$
for all $n \in \Ni$.
Let $t > 0$.
Then the Euler formula yields
\[
S_t u
= \lim_{n \to \infty} (I + \tfrac{t}{n} \, H)^{-n} u
\leq \lim_{n \to \infty} (I + \tfrac{t}{n} \, L)^{-n} u
= T_t u
\]
for all $u \in \gotC_1^+$.
So $S_t \leq T_t$.

`\ref{lkg234-1}$\Rightarrow$\ref{lkg234-3}'.
Write $K = H-L$.
Let $u \in D(H)^+$ and $x \in \ch$.
Then
\begin{eqnarray*}
((Ku) x, x)_{\ch}
& = & \lim_{t \downarrow 0}
   \frac{ (((I - S_t) u) x, x)_\ch }{t}
   - \frac{ (((I - T_t) u) x, x)_\ch }{t}  \\
& = & \lim_{t \downarrow 0}  \frac{ (((T_t - S_t) u) x, x)_\ch }{t}
\geq 0
.
\end{eqnarray*}
So $Ku \geq 0$ and $K$ is positivity preserving.

`\ref{lkg234-3}$\Rightarrow$\ref{lkg234-2}'.
Let $\lambda > 0$.
Since the product of positivity preserving maps is positivity preserving,
we obtain that
\[
(\lambda \, I + L)^{-1} - (\lambda \, I + H)^{-1}
= (\lambda \, I + L)^{-1} (H - L) (\lambda \, I + H)^{-1}
\geq 0
.  \]
So $(\lambda \, I + L)^{-1} \geq (\lambda \, I + H)^{-1}$.
\end{proof}

Our first result is a perturbation theorem where the relative bound
is less than one.
We emphasise that we do not assume that the operator $H$ is positivity preserving.

\begin{prop} \label{pkg230}
Let $-H$ be the generator of a positivity preserving contraction $C_0$-semigroup
on $\gotC_1^{\rm sa}$.
Let $K \colon D(H) \to \gotC_1^{\rm sa}$ be a positivity preserving operator.
Suppose there exist $a \in [0,\infty)$ and $b \in [0,1)$ such that
$\Tr (K u) \leq a \, \Tr u + b \, \Tr (Hu)$ for all $u \in D(H)^+$.
Define $L = H - K$.
Then one has the following.
\begin{tabel}
\item \label{pkg230-1}
The operator $L$ is quasi-$m$-accretive.
Moreover, the semigroup generated by $- L$
is a positivity preserving semigroup.
\item \label{pkg230-2}
If in addition
$\Tr (K u) \leq \Tr (Hu)$ for all $u \in D(H)^+$, then $L$ is $m$-accretive.
So $-L$ is the generator of a contraction semigroup.
\item \label{pkg230-3}
If again
$\Tr (K u) \leq \Tr (Hu)$ for all $u \in D(H)^+$, then
\[
\lim_{N \to \infty} \sum_{n=0}^N
   (\lambda \, I + H)^{-1} \Big(  K (\lambda \, I + H)^{-1} \Big)^n u
= (\lambda \, I + L)^{-1} u
\]
for all $u \in \gotC_1^{\rm sa}$ and $\lambda > 0$.
\end{tabel}
\end{prop}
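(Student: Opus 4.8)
The plan is to reduce the three assertions to estimates on resolvents and then to invoke the Hille--Yosida theorem. Throughout write $S_t = e^{-tH}$, $G_\lambda = (\lambda I + H)^{-1} = \int_0^\infty e^{-\lambda t} S_t\,dt$ — a positivity preserving operator with $\|G_\lambda\| \le \lambda^{-1}$ — and $L = H - K$, so that $D(L) = D(H)$. First I would argue, as in the proof of Lemma~\ref{lkg231}, that $K G_\lambda$ is positivity preserving and hence bounded, and that for $u \in \gotC_1^+$ the hypothesis together with $\Tr(H G_\lambda u) = \Tr u - \lambda\,\Tr(G_\lambda u) \le \Tr u$ and $\Tr(G_\lambda u) \le \lambda^{-1}\Tr u$ gives $\Tr(K G_\lambda u) \le (b + a/\lambda)\,\Tr u$; by Lemma~\ref{lkg200.5}\ref{lkg200.5-3} this yields $\|K G_\lambda\| \le b + a/\lambda$, which is $< 1$ as soon as $\lambda > \lambda_0 := a/(1-b)$. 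For such $\lambda$ the operator $I - K G_\lambda$ is boundedly invertible, so $\lambda I + L = (I - K G_\lambda)(\lambda I + H)$ is a bijection of $D(H)$ onto $\gotC_1^{\rm sa}$ with $(\lambda I + L)^{-1} = \sum_{n \ge 0} G_\lambda (K G_\lambda)^n$ converging in operator norm; in particular $L$ is closed and $(\lambda I + L)^{-1}$ is positivity preserving, being a norm limit of positivity preserving operators.

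Next, for $u \in \gotC_1^+$ and $\lambda > \lambda_0$ I would set $v = (\lambda I + L)^{-1} u \in D(H)^+$ and read off from $(\lambda I + L)v = u$ that $\lambda\,\Tr v + (\Tr(Hv) - \Tr(Kv)) = \Tr u$. Using $\Tr(Hv) \ge 0$ for every $v \in D(H)^+$ — which follows from $v - S_t v = \int_0^t S_s H v\,ds$ and $\Tr S_t v \le \Tr v$ — together with $\Tr(Kv) \le a\,\Tr v + b\,\Tr(Hv)$ and $b < 1$, the bracket is $\ge -a\,\Tr v$, so $(\lambda - \lambda_0)\Tr v \le (\lambda - a)\Tr v \le \Tr u$ and, by Lemma~\ref{lkg200.5}\ref{lkg200.5-3}, $\|(\lambda I + L)^{-1}\| \le (\lambda - \lambda_0)^{-1}$ for $\lambda > \lambda_0$. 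Hence $M := L + \lambda_0 I$ is closed and densely defined, $(0,\infty)$ lies in the resolvent set of $-M$, and $\|(\mu I + M)^{-n}\| \le \mu^{-n}$ for all $\mu > 0$, so by Hille--Yosida $M$ is $m$-accretive; equivalently $L$ is quasi-$m$-accretive and $-L$ generates a $C_0$-semigroup $(T_t)_{t>0}$ with $\|T_t\| \le e^{\lambda_0 t}$. This semigroup is positivity preserving, since $T_t u = \lim_{n\to\infty}(I + \tfrac tn L)^{-n} u$ and, for $n > \lambda_0 t$, $(I + \tfrac tn L)^{-1} = \tfrac nt (\tfrac nt I + L)^{-1}$ is positivity preserving. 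This proves~\ref{pkg230-1}. Under the extra hypothesis $\Tr(Kv) \le \Tr(Hv)$ of parts~\ref{pkg230-2}--\ref{pkg230-3} the bracket above is instead $\ge 0$, giving the sharper bound $\|(\lambda I + L)^{-1}\| \le \lambda^{-1}$ for $\lambda > \lambda_0$; then $\|(I + \tfrac tn L)^{-n}\| = (\tfrac nt)^n \|(\tfrac nt I + L)^{-n}\| \le 1$ for $n > \lambda_0 t$, so $\|T_t\| \le 1$ by the same Euler formula, $L$ is $m$-accretive, and $-L$ generates a contraction semigroup. In particular $(\lambda I + L)^{-1} = \int_0^\infty e^{-\lambda t} T_t\,dt$ is bounded and positivity preserving for \emph{every} $\lambda > 0$; this proves~\ref{pkg230-2}.

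For~\ref{pkg230-3} it is enough to treat $u \in \gotC_1^+$, a general $u$ being a difference of two such. Fixing $\lambda > 0$, I would write $R = (\lambda I + L)^{-1}$, $G = G_\lambda$, $A = K G$ (bounded and positivity preserving) and $y = K R u$, which lies in $\gotC_1^+$ because $R u \in D(H) = D(K)$ and $R$ is positivity preserving. From $\lambda I + L = (I - A)(\lambda I + H)$ and the bijectivity of $\lambda I + L$ and $\lambda I + H$ on $D(H)$ one obtains that $I - A$ is injective. The resolvent identity gives $R u = G u + G y$, hence $y = K R u = K G u + K G y = A u + A y$, and iterating $R u = G u + G K R u$ produces
\[
\sum_{n=0}^{N} G (KG)^n u = R u - G A^N y .
\]
Since $A^N y - A^{N+1} y = A^{N+1} u \ge 0$, the net $(A^N y)_N$ is decreasing in $\gotC_1^+$, so by Lemma~\ref{lkg200.5}\ref{lkg200.5-5} (applied to the increasing net $(y - A^N y)_N$) it converges in $\gotC_1$ to some $y_\infty \ge 0$; applying the continuous map $A$ yields $A y_\infty = y_\infty$, whence $y_\infty = 0$ by injectivity of $I - A$. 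Therefore $\sum_{n=0}^{N} G (KG)^n u = R u - G A^N y \to R u = (\lambda I + L)^{-1} u$ in $\gotC_1$, which is~\ref{pkg230-3}.

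The step that genuinely needs care is that the relative bound is strictly less than one but the constant $a$ may be positive, so the perturbed resolvent — and with it the operator-norm Neumann series $\sum_n G_\lambda (K G_\lambda)^n$ — is only available for $\lambda > \lambda_0$; establishing~\ref{pkg230-3} for \emph{all} $\lambda > 0$, where $\|K G_\lambda\|$ can exceed one and no geometric bound is at hand, is what forces the monotone-convergence Lemma~\ref{lkg200.5}\ref{lkg200.5-5} together with the injectivity of $I - K G_\lambda$ (that is, the $m$-accretivity established in~\ref{pkg230-2}) into the proof, rather than a naive termwise estimate.
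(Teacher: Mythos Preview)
Your argument is correct. For parts~\ref{pkg230-1} and~\ref{pkg230-2} you proceed essentially as the paper does (Neumann series for $\lambda > a/(1-b)$, positivity of the resolvent, trace estimate, Euler formula), with one cosmetic difference: where the paper reduces~\ref{pkg230-1} to~\ref{pkg230-2} by the shift $H \mapsto \omega I + H$ with $\omega = a/b$, you handle~\ref{pkg230-1} directly via the inequality $\Tr(Hv) \ge 0$ for $v \in D(H)^+$. Both are standard.

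The genuine divergence is in~\ref{pkg230-3}. The paper introduces the auxiliary family $rK$, $r \in (0,1)$, for which the Neumann series converges in operator norm; it then uses the comparison Lemma~\ref{lkg234} to bound the partial sums above by $(\lambda I + L)^{-1}u$, obtains a limit $v$ via Lemma~\ref{lkg200.5}\ref{lkg200.5-5}, and finally invokes an external strong resolvent convergence result (\cite{Dav1} Theorem~3.17) to pass to $r \uparrow 1$ and show $(\lambda I + L)^{-1}u \le v$. Your route is more direct and self-contained: from the resolvent identity you write $\sum_{n=0}^{N} G(KG)^n u = (\lambda I + L)^{-1} u - G (KG)^N y$ with $y = K(\lambda I + L)^{-1}u \in \gotC_1^+$, observe that $((KG)^N y)_N$ is decreasing in $\gotC_1^+$, apply Lemma~\ref{lkg200.5}\ref{lkg200.5-5}, and kill the limit using the injectivity of $I - KG$ (which is exactly where the $m$-accretivity of~\ref{pkg230-2} enters, since $I - KG = (\lambda I + L)(\lambda I + H)^{-1}$). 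This avoids both the auxiliary parameter $r$ and the appeal to~\cite{Dav1}; the trade-off is that the paper's $r$-regularisation is the prototype of the functional regularisation developed in the rest of the section, so its appearance here is thematically natural even if technically heavier.
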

\begin{proof}
First suppose in addition that
\begin{equation}
\Tr (K u) \leq \Tr (Hu)
\label{epkg230;1}
\end{equation}
for all $u \in D(H)^+$.

Let $(S_t)_{t > 0}$ be the semigroup generated by $-H$.
Let $\lambda > 0$.
Then $H (\lambda \, I + H)^{-1} = I - \lambda \, (\lambda \, I + H)^{-1} \leq I$
since $(\lambda \, I + H)^{-1}$ is positivity preserving.
Hence
\begin{eqnarray*}
\Tr(K (\lambda \, I + H)^{-1} u)
& \leq & a \, \Tr((\lambda \, I + H)^{-1} u)
    + b \, \Tr(H (\lambda \, I + H)^{-1} u)  \\
& \leq & a \, \|(\lambda \, I + H)^{-1} u\|_{\gotC_1} + b \, \Tr(H (\lambda \, I + H)^{-1} u)
\leq \Big( \frac{a}{\lambda} + b \Big) \Tr u
\end{eqnarray*}
for all $u \in \gotC_1^+$, where we used that $(\lambda \, I + H)^{-1} u \in D(H)^+$.
Moreover, $K (\lambda \, I + H)^{-1}$ is positivity preserving
as a composition of two positivity preserving maps. Therefore
\[
\|K (\lambda \, I + H)^{-1}\|
\leq \frac{a}{\lambda} + b
\]
by Lemma~\ref{lkg200.5}\ref{lkg200.5-3}.

Let $\lambda \in \Ri$ and suppose that $\lambda > \frac{\textstyle a}{\textstyle 1 - b}$.
Then
$\lambda \, I + L = (I - K (\lambda \, I + H)^{-1}) (\lambda \, I + H)$ is
invertible and
\begin{equation}
(\lambda \, I + L)^{-1}
= \sum_{n=0}^\infty (\lambda \, I + H)^{-1} \Big(  K (\lambda \, I + H)^{-1} \Big)^n
.
\label{epkg230;8}
\end{equation}
If $n \in \Ni_0$, then
$(\lambda \, I + H)^{-1} \Big(  K (\lambda \, I + H)^{-1} \Big)^n \in \cl(\gotC_1^{\rm sa})$
is positivity preserving.
Hence $(\lambda \, I + L)^{-1}$ is positivity preserving.
Moreover, if $u \in \gotC_1^+$ then (\ref{epkg230;8}) yields
$(\lambda \, I + L)^{-1} u \in D(H)^+$.
Now by the addition assumption (\ref{epkg230;1}) one obtains
\begin{eqnarray*}
\Tr u
& = & \Tr (\lambda \, I + L) (\lambda \, I + L)^{-1} u  \\
& = & \lambda \, \Tr (\lambda \, I + L)^{-1} u + \Tr (H - K)
(\lambda \, I + L)^{-1} u  \\
& \geq & \lambda \, \Tr (\lambda \, I + L)^{-1} u
.
\end{eqnarray*}
Therefore
$\Tr ((\lambda \, I + L)^{-1} u) \leq \lambda^{-1} \, \Tr u$.
Since $(\lambda \, I + L)^{-1}$ is positivity preserving, it follows from
Lemma~\ref{lkg200.5}\ref{lkg200.5-3} that $\|(\lambda \, I + L)^{-1}\| \leq \lambda^{-1}$
for all $\lambda > \frac{a}{1 - b}$.
Hence the operator $L$ is $m$-accretive and $- L$ is the generator of a contraction
$C_0$-semigroup.

Let $(T_t)_{t > 0}$ be the semigroup generated by $- L$.
If $t > 0$, then the operator $(I + \frac{t}{n} \, L)^{-1}$ is
positivity preserving for all large $n \in \Ni$.
Hence by the Euler formula one obtains that
$T_t u = \lim_{n \to \infty} (I + \frac{t}{n} \, L)^{-n} u \in \gotC_1^+$
for all $u \in \gotC_1^+$.
Therefore the semigroup $(T_t)_{t>0}$ is positivity preserving.
This proves Statements~\ref{pkg230-1} and \ref{pkg230-2}
of the the proposition if in addition (\ref{epkg230;1}) is valid.
Note that in particular we have proved Statement~\ref{pkg230-2}.

We next prove Statement~\ref{pkg230-1} without the
additional assumption (\ref{epkg230;1}).
We may assume that $b > 0$.
Choose $\omega = \frac{a}{b}$.
Then
\[
\Tr (Ku)
\leq a \, \Tr u + b \, \Tr (Hu)
= b \, \Tr \Big( (\omega \, I + H) u \Big)
\leq \Tr \Big( (\omega \, I + H) u \Big)
\]
for all $u \in D(H)^+$.
So by the above the operator $(\omega \, I + H) - K$ is $m$-accretive and
is the minus generator of a positivity preserving semigroup.
Therefore $L$ is quasi-$m$-accretive and it is the minus generator of a
positivity preserving semigroup.

Finally we prove Statement~\ref{pkg230-3}.
The proof is inspired by the proof of Lemma~7 in \cite{Kat8}.
Fix $\lambda > 0$.
Let $N \in \Ni$ and $r \in (0,1)$.
Then $\|r \, K \, (\lambda \, I + H)^{-1}\| \leq r$ by Lemma~\ref{lkg231}.
So the Neumann series gives
\begin{eqnarray*}
\sum_{n=0}^N (\lambda \, I + H)^{-1} \Big( r \, K (\lambda \, I + H)^{-1} \Big)^n
& \leq & \sum_{n=0}^\infty (\lambda \, I + H)^{-1} \Big( r \, K (\lambda \, I + H)^{-1} \Big)^n  \\
& = & (\lambda \, I + H - r \, K)^{-1}
\leq (\lambda \, I + H - K)^{-1}
,
\end{eqnarray*}
where we use Lemma~\ref{lkg234} in the last step.
Let $u \in \gotC_1^+$.
Taking the limit $r \uparrow 1$ gives
\[
\sum_{n=0}^N (\lambda \, I + H)^{-1} \Big( K (\lambda \, I + H)^{-1} \Big)^n u
\leq (\lambda \, I + H - K)^{-1} u
= (\lambda \, I + L)^{-1} u
.  \]
In particular,
\[
\Tr \Big( \sum_{n=0}^N (\lambda \, I + H)^{-1} \Big( K (\lambda \, I + H)^{-1} \Big)^n u \Big)
\leq \Tr \Big( (\lambda \, I + L)^{-1} u \Big)
.  \]
Then Lemma~\ref{lkg200.5}\ref{lkg200.5-5} gives that
$v = \lim_{N \to \infty} \sum_{n=0}^N (\lambda \, I + H)^{-1} \Big( K (\lambda \, I + H)^{-1} \Big)^n u$
exists in $\gotC_1$.
Then $v \leq (\lambda \, I + L)^{-1} u$.
Conversely, if $N \in \Ni$ and $r \in (0,1)$, then
\[
\sum_{n=0}^N (\lambda \, I + H)^{-1} \Big( r \, K (\lambda \, I + H)^{-1} \Big)^n u
\leq \sum_{n=0}^N (\lambda \, I + H)^{-1} \Big( K (\lambda \, I + H)^{-1} \Big)^n u
\leq v
.  \]
So
\begin{equation}
(\lambda \, I + H - r \, K)^{-1} u
= \sum_{n=0}^\infty (\lambda \, I + H)^{-1} \Big( r \, K (\lambda \, I + H)^{-1} \Big)^n u
\leq v
\label{epkg230;10}
\end{equation}
If $\mu > \frac{a}{1-b}$, then it follows from (\ref{epkg230;8}) that
$\lim_{r \uparrow 1} (\mu \, I + H - r \, K)^{-1} = (\mu \, I + H - K)^{-1}$
in the strong operator topology.
Since $-(H - r \, K)$ is the generator of a contraction semigroup for all $r \in (0,1]$,
it follows from \cite{Dav1} Theorem~3.17 that
$\lim_{r \uparrow 1} (\mu \, I + H - r \, K)^{-1} = (\mu \, I + H - K)^{-1}$
in the strong operator topology for all $\mu > 0$.
Then taking the limit $r \uparrow 1$ in (\ref{epkg230;10})
gives $(\lambda \, I + H - K)^{-1} u \leq v$.
So $v = (\lambda \, I + H - K)^{-1} u$ and the proof is complete.
\end{proof}

We are now able to prove Theorem~\ref{tkg101} regarding the
functional regularisation of the perturbation of~$H$ and we shall
prove that the perturbed semigroup is a dynamical semigroup.

\begin{thm} \label{tkg203}
Let $-H$ be the generator of a positivity preserving contraction $C_0$-semigroup
on $\gotC_1^{\rm sa}$.
Let $K \colon D(H) \to \gotC_1^{\rm sa}$ be a positivity preserving operator
and suppose that
\[
\Tr (Ku) \leq \Tr (H u)
\]
for all $u \in D(H)^+$.
Let $(K_\alpha)_{\alpha \in J}$ be a functional regularisation of $K$.
Set $L_\alpha = H - K_\alpha$ for all $\alpha \in J$.
Then one has the following.
\begin{tabel}
\item \label{tkg203-0.5}
If $\alpha \in J$, then the operator $L_\alpha$ is $m$-accretive and
the semigroup $(T^\alpha_t)_{t>0}$ generated by $- L_\alpha$ is a positivity preserving
contraction semigroup.
\item \label{tkg203-1}
If $\alpha,\beta \in J$ and $\alpha \leq \beta$, then
$T^\alpha_t \leq T^\beta_t$ for all $t > 0$.
\item \label{tkg203-2}
If $t > 0$, then $\lim_\alpha T^\alpha_t u$ exists in $\gotC_1^{\rm sa}$
for all $u \in \gotC_1^{\rm sa}$.
\end{tabel}
For all $t > 0$ define $T_t \colon \gotC_1^{\rm sa} \to \gotC_1^{\rm sa}$ by
$T_t u = \lim_\alpha T^\alpha_t u$.
\begin{tabel}
\setcounter{teller}{3}
\item \label{tkg203-3}
If $t > 0$, then the map $T_t$ is positivity preserving.
\item \label{tkg203-4}
$(T_t)_{t > 0}$ is a contraction $C_0$-semigroup on $\gotC_1^{\rm sa}$.
\end{tabel}
Now let $-L$ be the generator of the $C_0$-semigroup $(T_t)_{t > 0}$.
\begin{tabel}
\setcounter{teller}{5}
\item \label{tkg203-5}
Let $\lambda > 0$.
Then $\lim_\alpha (\lambda \, I + L_\alpha)^{-1} u = (\lambda \, I + L)^{-1} u$
in $\gotC_1$ for all $u \in \gotC_1^{\rm sa}$.
\item \label{tkg203-6}
The operator $L$
is an extension of operator $H - K$.
\end{tabel}
\end{thm}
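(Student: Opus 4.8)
The plan is to reduce everything to the strictly‑less‑than‑one perturbation result Proposition~\ref{pkg230} applied to each $L_\alpha$, and then to pass to the limit over $\alpha$ using the monotone‑net convergence in $\gotC_1^+$ (Lemma~\ref{lkg200.5}\ref{lkg200.5-5}) and the resolvent comparison Lemma~\ref{lkg234}. For \ref{tkg203-0.5}: each $K_\alpha$ is positivity preserving by \ref{dkg102-1} and has a relative bound with constant $b_\alpha<1$ by \ref{dkg102-2}; moreover $K_\alpha\le K$ by \ref{dkg102-3} gives $\Tr(K_\alpha u)\le\Tr(Ku)\le\Tr(Hu)$ for all $u\in D(H)^+$, so Proposition~\ref{pkg230} (parts \ref{pkg230-1} and \ref{pkg230-2}) applies to $L_\alpha=H-K_\alpha$ and yields that $L_\alpha$ is $m$-accretive and that $-L_\alpha$ generates a positivity preserving contraction $C_0$-semigroup $(T^\alpha_t)_{t>0}$. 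For \ref{tkg203-1}: since $D(L_\alpha)=D(L_\beta)=D(H)$ and $L_\alpha-L_\beta=K_\beta-K_\alpha$ is positivity preserving whenever $\alpha\le\beta$ (by \ref{dkg102-3}), the implication \ref{lkg234-3}$\Rightarrow$\ref{lkg234-1} of Lemma~\ref{lkg234} gives $T^\alpha_t\le T^\beta_t$.

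For \ref{tkg203-2} and \ref{tkg203-3}: if $u\in\gotC_1^+$ then $(T^\alpha_t u)_\alpha$ is increasing by \ref{tkg203-1} and bounded in trace by $\Tr u$ (contractivity and positivity), hence converges in $\gotC_1$ by Lemma~\ref{lkg200.5}\ref{lkg200.5-5}; writing a general $u\in\gotC_1^{\rm sa}$ as a difference of two elements of $\gotC_1^+$ (Lemma~\ref{lkg200.5}\ref{lkg200.5-1}) gives \ref{tkg203-2}, and closedness of $\gotC_1^+$ gives $T_t(\gotC_1^+)\subset\gotC_1^+$, which is \ref{tkg203-3}. Contractivity $\|T_t\|\le1$ and linearity of $T_t$ pass to the limit immediately, and the semigroup identity $T_{t+s}=T_tT_s$ follows from the uniform bound $\|T^\alpha_t\|\le1$ via the splitting $T^\alpha_tT^\alpha_su-T_tT_su=T^\alpha_t(T^\alpha_su-T_su)+(T^\alpha_t-T_t)T_su$.

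The remaining content of \ref{tkg203-4} is strong continuity, which I expect to be the main obstacle. The key step is the trace‑norm convergence $K_\alpha u\to Ku$ in $\gotC_1$ for $u\in D(H)^+$: the net $(K_\alpha u)_\alpha$ lies in $\gotC_1^+$, is increasing, and is bounded above by $Ku$ (all from \ref{dkg102-1} and \ref{dkg102-3}), so it converges in $\gotC_1$ to some $\tilde u\le Ku$ by Lemma~\ref{lkg200.5}\ref{lkg200.5-5}; since $\gotC_1$-convergence implies convergence of quadratic forms, property \ref{dkg102-4} forces $(\tilde u\,x,x)_\ch=((Ku)x,x)_\ch$ on a dense subspace and hence $\tilde u=Ku$. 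Now, for $u\in D(H)=D(L_\alpha)$ one has the $C_0$-semigroup identity $T^\alpha_t u-u=-\int_0^t T^\alpha_s(Hu)\,ds+\int_0^t T^\alpha_s(K_\alpha u)\,ds$; passing to the limit over $\alpha$ with dominated convergence in the Bochner integral (using $K_\alpha u\to Ku$ for the second term) yields, for $u\in D(H)^+$,
\[
  T_t u - u = - \int_0^t T_s\big((H-K)u\big) \, ds ,
\]
so $\|T_t u-u\|_{\gotC_1}\le t\,\|(H-K)u\|_{\gotC_1}\to0$ as $t\downarrow0$. Every element of $D(H)$ is a difference of two elements of $D(H)^+$ --- apply the positivity preserving operator $(\mu I+H)^{-1}$, whose range is $D(H)$, to the positive and negative parts of $(\mu I+H)u$ --- so by density of $D(H)$ and $\|T_t\|\le1$ an $\varepsilon/3$ argument gives $\lim_{t\downarrow0}T_t u=u$ for all $u\in\gotC_1^{\rm sa}$; continuity on $(0,\infty)$ then follows from the semigroup law, proving \ref{tkg203-4}.

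Finally, with $-L$ the generator of $(T_t)_{t>0}$: \ref{tkg203-5} is a Laplace transform together with dominated convergence, $(\lambda I+L_\alpha)^{-1}u=\int_0^\infty e^{-\lambda s}T^\alpha_s u\,ds\to\int_0^\infty e^{-\lambda s}T_s u\,ds=(\lambda I+L)^{-1}u$; and \ref{tkg203-6} follows by dividing the displayed identity by $t$ and letting $t\downarrow0$, which (by the strong continuity just established) shows that every $u\in D(H)^+$ lies in $D(L)$ with $Lu=(H-K)u$, whence by linearity $D(H)\subset D(L)$ and $L$ restricts to $H-K$ on $D(H)$. To summarise, the only genuinely delicate point is the passage from the weak, dense‑subspace convergence provided by \ref{dkg102-4} to the trace‑norm convergence $K_\alpha u\to Ku$: the monotonicity \ref{dkg102-3} and Lemma~\ref{lkg200.5}\ref{lkg200.5-5} are exactly what make this upgrade possible, and once it is in hand the integral identity delivers strong continuity, the resolvent limit and the extension property with no further difficulty.
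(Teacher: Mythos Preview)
Your treatment of \ref{tkg203-0.5}--\ref{tkg203-3} and of the semigroup law matches the paper. The divergence begins at strong continuity. You write $T^\alpha_t u-u=-\int_0^t T^\alpha_s((H-K_\alpha)u)\,ds$ and then ``pass to the limit over $\alpha$ with dominated convergence''. This hides a genuine gap: dominated convergence is a theorem about sequences and \emph{fails} for general nets, and the putative limit $\int_0^t T_s v\,ds$ is a Bochner integral requiring strong measurability of $s\mapsto T_s v$, which is precisely the strong continuity you are trying to prove. The repair is easy: estimate in norm \emph{before} taking the limit, $\|T^\alpha_t u-u\|_{\gotC_1}\le t(\|Hu\|_{\gotC_1}+\|K_\alpha u\|_{\gotC_1})\le t(\|Hu\|_{\gotC_1}+\|Ku\|_{\gotC_1})$ for $u\in D(H)^+$, and only then let $\alpha$ run. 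The paper avoids the issue by a different and cleaner device that never touches $D(H)$: since $K_\alpha\ge 0$, Lemma~\ref{lkg234} gives $S_t\le T^\alpha_t$, whence for $u\in\gotC_1^+$ one has $\|T^\alpha_t u-S_t u\|_{\gotC_1}=\Tr(T^\alpha_t u)-\Tr(S_t u)\le\Tr u-\Tr(S_t u)$; letting $\alpha$ run and then $t\downarrow 0$ yields $T_t u\to u$ directly from the strong continuity of $(S_t)_{t>0}$.

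For \ref{tkg203-5} the same net problem recurs in your ``Laplace transform with dominated convergence''; the paper handles it by a Dini argument on the scalar integrands $s\mapsto e^{-\lambda s}((T^\alpha_s u)x,x)_\ch$ over compact intervals (Dini's theorem does hold for monotone nets). For \ref{tkg203-6} the paper takes a different route through a Neumann-series representation of $(\lambda I+L)^{-1}$ (Lemmas~\ref{lkg245} and~\ref{lkg246}). Your idea is more economical: your proof that $K_\alpha u\to Ku$ in $\gotC_1$ for $u\in D(H)^+$ is correct (it is essentially Lemma~\ref{lkg245}), and once \ref{tkg203-5} is available you can simply pass to the limit in $(\lambda I+L_\alpha)^{-1}(\lambda I+H-K_\alpha)u=u$ to obtain $(\lambda I+L)^{-1}(\lambda I+H-K)u=u$ and hence \ref{tkg203-6}, without any integral identity. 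Your differentiation argument also works, but only \emph{after} strong continuity has been secured, so that the Bochner integral and the limit $\tfrac1t\int_0^t T_s v\,ds\to v$ are legitimate; as written, the order of your steps is circular.
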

\begin{proof}
\ref{tkg203-0.5}.
Condition (\ref{eqtk102}) and Definition~\ref{dkg102}\ref{dkg102-3} imply that
\[
\Tr (K_\alpha u) \leq \Tr (H u)
\]
for all $u \in D(H)^+$.
Using Definition~\ref{dkg102}\ref{dkg102-1} and \ref{dkg102-2}, we may apply
Proposition~\ref{pkg230} to $H$ and $K_\alpha$
in order to obtain the statement.

\ref{tkg203-1}.
Let $\alpha,\beta \in J$ with $\alpha \leq \beta$.
Then
$L_\alpha - L_\beta = K_\beta - K_\alpha \geq 0$.
Moreover, $D(L_\alpha) = D(L_\beta)$.
Now the statement follows from Lemma~\ref{lkg234}\ref{lkg234-3}$\Rightarrow$\ref{lkg234-1}.

\ref{tkg203-2}.
Fix $t > 0$. Let $u \in \gotC_1^+$.
Then \ref{tkg203-1} yields $0 \leq T^\alpha_t u \leq T^\beta_t u$ for all $\alpha,\beta \in J$
with $\alpha \leq \beta$.
Moreover, $\Tr(T^\alpha_t u) = \|T^\alpha_t u\|_{\gotC_1} \leq \|u\|_{\gotC_1}$
for all $\alpha \in J$, since $T^\alpha_t$ is a contraction
by Statement~\ref{tkg203-0.5}.
So $\lim_\alpha T^\alpha_t u$ exists in $\gotC_1^{+}$ by Lemma~\ref{lkg200.5}\ref{lkg200.5-5}.
Then the statement for all $u \in \gotC_1^{\rm sa}$ follows from Lemma~\ref{lkg200.5}\ref{lkg200.5-1}.

\ref{tkg203-3}.
Since the semigroup $(T^\alpha_t)_{t > 0}$ is positivity preserving
for all $\alpha \in J$ by Proposition~\ref{pkg230},
the assertion follows from \ref{tkg203-2} and from the limit
$T_t u = \lim_\alpha T^\alpha_t u$ for all $u \in \gotC_1^+$.

\ref{tkg203-4}.
Let $t > 0$.
Then $\Tr T_t u = \lim_\alpha \Tr(T^\alpha_t u) = \lim_\alpha\|T^\alpha_t u\|_{\gotC_1}
\leq \|u\|_{\gotC_1} = \Tr u$ for all $u \in \gotC_1^+$.
Since $T_t$ is positivity preserving by Statement~\ref{tkg203-3}, it follows from
Lemma~\ref{lkg200.5}\ref{lkg200.5-3} that $T_t$ is a contraction.
Next, taking the limit \ref{tkg203-2} one verifies the semigroup property of the family
$(T_t)_{t > 0}$.

To check the strong continuity of the semigroup $(T_t)_{t > 0}$,
let $u \in \gotC_1^+$, $t > 0$ and $\alpha \in J$.
Then $S_t \leq T^\alpha_t$ by Lemma~\ref{lkg234}\ref{lkg234-3}$\Rightarrow$\ref{lkg234-1}
and Definition~\ref{dkg102}\ref{dkg102-1}.
So $T^\alpha_t - S_t \geq 0$.
Since $T^\alpha_t$ is a contraction, it follows that
\[
\|T^\alpha_t u - S_t u\|_{\gotC_1}
= \Tr ((T^\alpha_t - S_t) u)
\leq \Tr u - \Tr S_t u
= \Tr ((I - S_t) u)
.  \]
Taking the limit over $\alpha$ one gets $\|T_t u - S_t u\|_{\gotC_1} \leq \Tr ((I - S_t) u)$.
Since $(S_t)_{t>0}$ is a strongly continuous semigroup on $\gotC_1^{\rm sa}$
and $\Tr$ is continuous from $\gotC_1^{\rm sa}$ into $\Ri$, one deduces that
$\lim_{t \downarrow 0} \|T_t u - S_t u\|_{\gotC_1} = 0$.
But $\lim_{t \downarrow 0} S_t u = u$ in $\gotC_1^{\rm sa}$.
So $\lim_{t \downarrow 0} T_t u = u$ in $\gotC_1^{\rm sa}$.
The extension of the last limit to all $u \in \gotC_1^{\rm sa}$
follows from Lemma~\ref{lkg200.5}\ref{lkg200.5-1}.

\ref{tkg203-5}.
Let $u \in \gotC_1^+$.
Let $\alpha,\beta \in J$ with $\alpha \leq \beta$.
Then \ref{tkg203-1} and the definition of $T$ give
$0 \leq T^\alpha_t u \leq T^\beta_t u \leq T_t u$ for all $t > 0$.
Hence
\begin{equation}
0 \leq (\lambda \, I + L_\alpha)^{-1} u \leq (\lambda \, I + L_\beta)^{-1} u
\leq (\lambda \, I + L)^{-1} u
.
\label{etkg203;2}
\end{equation}
Therefore by Lemma~\ref{lkg200.5}\ref{lkg200.5-5} it
follows that $\lim_\alpha (\lambda \, I + L_\alpha)^{-1} u$ exists in $\gotC_1$.
We next show that the limit is equal to $(\lambda \, I + L)^{-1} u$.

Let $x \in \ch$ and $N \in (1,\infty)$.
For all $\alpha \in J$ define $f_\alpha,f \colon [0,N] \to [0,\infty)$ by
\[
f_\alpha(t) = e^{-\lambda t} \, ((T^\alpha_t u)x, x)_\ch
\quad \mbox{and} \quad
f(t) = e^{-\lambda t} \, ((T_t u)x, x)_\ch
.  \]
Then $f_\alpha,f$ are continuous.
Moreover, $(f_\alpha)_{\alpha \in J}$ is increasing and
$\lim_\alpha f_\alpha = f$ pointwise.
Since $[0,N]$ is compact it follows that $\lim f_\alpha = f$ uniformly.
Therefore
\[
\lim_\alpha \int_0^N e^{-\lambda t} \, ((T^\alpha_t u)x, x)_\ch \, dt
= \int_0^N e^{-\lambda t} \, ((T_t u)x, x)_\ch \, dt
.  \]
This is for all $N \in (1,\infty)$.
If $\alpha \in J$, then the semigroup $(T^\alpha_t)_{t>0}$ is
a contraction by Statement~\ref{tkg203-0.5}.
Hence $\|T^\alpha_t u\|_{\cl(\ch)} \leq \|T^\alpha_t u\|_{\gotC_1} \leq \|u\|_{\gotC_1}$
for all $t > 0$.
Similarly $\|T_t u\|_{\cl(\ch)} \leq \|u\|_{\gotC_1}$ for all $t > 0$.
Hence
\begin{eqnarray*}
\lim_\alpha (((\lambda \, I + L_\alpha)^{-1} u)x, x)_\ch
& = & \lim_\alpha \int_0^\infty e^{-\lambda t} \, ((T^\alpha_t u)x, x)_\ch \, dt  \\
& = & \int_0^\infty e^{-\lambda t} \, ((T_t u)x, x)_\ch \, dt
= (((\lambda \, I + L)^{-1} u)x, x)_\ch
.
\end{eqnarray*}
This is for all $x \in \ch$.
Polarisation gives
\[
\lim_\alpha (((\lambda \, I + L_\alpha)^{-1} u)x, y)_\ch
= (((\lambda \, I + L)^{-1} u)x, y)_\ch
\]
for all $x,y \in \ch$.
Since we know that
$\lim_\alpha (\lambda \, I + L_\alpha)^{-1} u$ exists in $\gotC_1$,
we conclude that
\[
\lim_\alpha (\lambda \, I + L_\alpha)^{-1} u
= (\lambda \, I + L)^{-1} u
\]
in $\gotC_1$.

Finally Lemma~\ref{lkg200.5}\ref{lkg200.5-1} implies that
$\lim_\alpha (\lambda \, I + L_\alpha)^{-1} u = (\lambda \, I + L)^{-1} u$ for all
$u \in \gotC_1^{\rm sa}$ and the proof of Statement~\ref{tkg203-5} is complete.

Before we can prove Statement~\ref{tkg203-6}, we need two lemmata.
In the next lemma we use for the first time the convergence in
Definition~\ref{dkg102}\ref{dkg102-4}.

\begin{lemma} \label{lkg245}
Let $\lambda > 0$ and $u \in \gotC_1^{\rm sa}$.
Then
$\lim_\alpha K_\alpha \, (\lambda \, I + H)^{-1} u = K \, (\lambda \, I + H)^{-1} u$
in $\gotC_1$.
\end{lemma}
\begin{proof}
By Lemma~\ref{lkg200.5}\ref{lkg200.5-1} we may assume that $u \in \gotC_1^+$.
Then the net $( K_\alpha \, (\lambda \, I + H)^{-1} u )_{\alpha \in J}$
is increasing and
$\Tr K_\alpha \, (\lambda \, I + H)^{-1} u \leq \Tr K \, (\lambda \, I + H)^{-1} u
\leq \|u\|_{\gotC_1}$ by Definition~\ref{dkg102}\ref{dkg102-3} and Lemma~\ref{lkg231}.
So $v = \lim_\alpha K_\alpha \, (\lambda \, I + H)^{-1} u$ exists in $\gotC_1$ by
Lemma~\ref{lkg200.5}\ref{lkg200.5-5}.
By Definition~\ref{dkg102}\ref{dkg102-4} there exists a
dense subspace $V$ of $\ch$ such that
$\lim_\alpha ((K_\alpha u)x,x)_\ch = ((K u)x,x)_\ch$
for all $x \in V$.
If $x \in V$, then
\[
( v x,x )_{\ch}
= \lim_\alpha ( (K_\alpha \, (\lambda \, I + H)^{-1} u) x,x )_{\ch}
= ( (K \, (\lambda \, I + H)^{-1} u) x,x )_{\ch}
.  \]
So by polarisation one deduces that
$v x
= (K \, (\lambda \, I + H)^{-1} u) x$ in $\ch$ for all $x \in V$.
Hence $v = K \, (\lambda \, I + H)^{-1} u$ by continuity and
$\lim_\alpha K_\alpha \, (\lambda \, I + H)^{-1} u = (K \, (\lambda \, I + H)^{-1} u$
in $\gotC_1$.
\end{proof}

Proposition~\ref{pkg230}\ref{pkg230-3} is applicable to the
operators $K_\alpha$.
We next show a version for the full perturbation~$K$.

\begin{lemma} \label{lkg246}
Let $\lambda > 0$ and $u \in \gotC_1^{\rm sa}$.
Then
\[
\lim_{N \to \infty}
   \sum_{n=0}^N  (\lambda \, I + H)^{-1} \Big(  K \, (\lambda \, I + H)^{-1} \Big)^n u
= (\lambda \, I + L)^{-1} u
\]
in $\gotC_1$.
\end{lemma}
\begin{proof}
For all $\lambda > 0$, $N \in \Ni$ and $\alpha \in J$ define
\begin{eqnarray*}
R_{\alpha,N}(\lambda)
& = & \sum_{n=0}^N  (\lambda \, I + H)^{-1} \Big(  K_\alpha \, (\lambda \, I + H)^{-1} \Big)^n
\quad \mbox{and}   \\
R_N(\lambda)
& = & \sum_{n=0}^N  (\lambda \, I + H)^{-1} \Big(  K \, (\lambda \, I + H)^{-1} \Big)^n
.
\end{eqnarray*}
Again it suffices to consider $u \in \gotC_1^+$.
Let $N \in \Ni$.
Then
\[
R_{\alpha,N}(\lambda) u
\leq \sum_{n=0}^\infty
   (\lambda \, I + H)^{-1} \Big(  K_\alpha \, (\lambda \, I + H)^{-1} \Big)^n u
= (\lambda \, I + L_\alpha)^{-1} u
\leq (\lambda \, I + L)^{-1} u
\]
for all $\alpha \in J$ by Proposition~\ref{pkg230}\ref{pkg230-3} and
(\ref{etkg203;2}).
Note that $K_\alpha \, (\lambda \, I + H)^{-1}$
is a contraction for all $\alpha \in J$.
Take the limit over $\alpha$.
Then Lemma~\ref{lkg245} gives
$R_N(\lambda) u \leq (\lambda \, I + L)^{-1} u$.
Therefore $\Tr R_N(\lambda) u \leq \Tr (\lambda \, I + L)^{-1} u$ for all $N \in \Ni$.
It follows from Lemma~\ref{lkg200.5}\ref{lkg200.5-5} that
$v = \lim_{N \to \infty} R_N(\lambda) u$ exists in $\gotC_1$.
Then $v \leq (\lambda \, I + L)^{-1} u$.
If $N \in \Ni$ and $\alpha \in J$, then
Definition~\ref{dkg102}\ref{dkg102-3} and
Lemma~\ref{lkg200.5}\ref{lkg200.5-6} give
$R_{\alpha,N}(\lambda) u \leq R_N(\lambda) u \leq v$.
So $(\lambda \, I + L_\alpha)^{-1} u \leq v$ by
Proposition~\ref{pkg230}\ref{pkg230-3} and
consequently $(\lambda \, I + L)^{-1} u \leq v$ by Theorem~\ref{tkg203}\ref{tkg203-5}.
So $v = (\lambda \, I + L)^{-1} u$ as required.
\end{proof}

Now we are able to prove Statement~\ref{tkg203-6} of Theorem~\ref{tkg203}
as in \cite{Kat8} Lemma~8.

\medskip

\noindent
{\bf Proof of Theorem~\ref{tkg203}\ref{tkg203-6}}.
For all $N \in \Ni$ write
\[
R_N = \sum_{n=0}^N  (I + H)^{-1} \Big(  K \, (I + H)^{-1} \Big)^n
.  \]
Then $R_N = (I + H)^{-1} + R_{N-1} \, K \, (I + H)^{-1}$
for all $N \in \Ni$ with $N \geq 2$.
Let $u \in D(H)$.
Then $R_N \, (I + H) u = u + R_{N-1} \, K u$.
Taking the limit $N \to \infty$ and using Lemma~\ref{lkg246} gives
$(I + L)^{-1} \, (I + H) u = u + (I + L)^{-1} \, K u$.
Hence $(I + L)^{-1} (I + H - K) u = u$ and $u \in D(L)$.
Moreover, $(I + H - K) u = (I + L) u$.
So $L$ is an extension of $H - K$.
The proof of Theorem~\ref{tkg203} is complete.
\end{proof}

Let $L$ and the semigroup $(T_t)_{t > 0}$ be as in Theorem~\ref{tkg203}.
Then $(T_t)_{t > 0}$ is a dynamical semigroup.
It satisfies the following minimality.

\begin{thm} \label{tkg204}
Let $L'$ be an extension of the operator $(H - K)$, $D(H - K) = D(H)$,
such that $-L'$ generates a positivity preserving
$C_0$-semigroup $(T'_t)_{t > 0}$.
Then $T'_t \geq T_t$ for all $t > 0$.
\end{thm}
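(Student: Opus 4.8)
The goal is to show the constructed semigroup $(T_t)_{t>0}$ is minimal among positivity preserving $C_0$-semigroups whose generator extends $-(H-K)$. The natural route is to pass through resolvents, exactly as in the analogous statements already proven: by Lemma~\ref{lkg234} (in the form \ref{lkg234-2}$\Rightarrow$\ref{lkg234-1}) it suffices to prove $(\lambda\,I+L)^{-1} \leq (\lambda\,I+L')^{-1}$ for all $\lambda>0$, since both $-L$ and $-L'$ generate positivity preserving $C_0$-semigroups. By Theorem~\ref{tkg203}\ref{tkg203-5} (or directly by Lemma~\ref{lkg246}) we have the series representation $(\lambda\,I+L)^{-1}u = \lim_{N\to\infty}\sum_{n=0}^{N}(\lambda\,I+H)^{-1}\bigl(K\,(\lambda\,I+H)^{-1}\bigr)^{n}u$ for $u\in\gotC_1^+$, so it is enough to show that each partial sum $R_N(\lambda)u$ is $\leq (\lambda\,I+L')^{-1}u$.

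**Key steps.** First I would fix $\lambda>0$ and $u\in\gotC_1^+$ and argue by induction on $N$ that $R_N(\lambda)u \leq (\lambda\,I+L')^{-1}u$. The induction uses the identity $R_N(\lambda) = (\lambda\,I+H)^{-1} + R_{N-1}(\lambda)\,K\,(\lambda\,I+H)^{-1}$, together with a resolvent identity for $L'$. The point is that since $L'$ extends $H-K$ and $(\lambda\,I+H)^{-1}u \in D(H) \subset D(L')$ (using $D(L')\supseteq D(H-K)=D(H)$), one has
\[
(\lambda\,I+L')\,(\lambda\,I+H)^{-1}u = u - K\,(\lambda\,I+H)^{-1}u,
\]
hence $(\lambda\,I+H)^{-1}u = (\lambda\,I+L')^{-1}u - (\lambda\,I+L')^{-1}K\,(\lambda\,I+H)^{-1}u$. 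For $u\in\gotC_1^+$ the operator $(\lambda\,I+L')^{-1}$ is positivity preserving and $K\,(\lambda\,I+H)^{-1}u\geq 0$, so $(\lambda\,I+H)^{-1}u \leq (\lambda\,I+L')^{-1}u$, which is the base case. For the inductive step, apply $R_{N-1}(\lambda)$ on the left of the identity above evaluated at $K\,(\lambda\,I+H)^{-1}u$: since $R_{N-1}(\lambda)$ is positivity preserving and the induction hypothesis gives $R_{N-1}(\lambda)w \leq (\lambda\,I+L')^{-1}w$ for $w = K\,(\lambda\,I+H)^{-1}u \geq 0$, one obtains
\[
R_N(\lambda)u = (\lambda\,I+H)^{-1}u + R_{N-1}(\lambda)\,K\,(\lambda\,I+H)^{-1}u
\leq (\lambda\,I+H)^{-1}u + (\lambda\,I+L')^{-1}K\,(\lambda\,I+H)^{-1}u = (\lambda\,I+L')^{-1}u.
\]
Letting $N\to\infty$ via Lemma~\ref{lkg246} yields $(\lambda\,I+L)^{-1}u \leq (\lambda\,I+L')^{-1}u$ for all $u\in\gotC_1^+$, hence $(\lambda\,I+L)^{-1} \leq (\lambda\,I+L')^{-1}$. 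Finally, Lemma~\ref{lkg234}\ref{lkg234-2}$\Rightarrow$\ref{lkg234-1} gives $T_t \leq T'_t$ for all $t>0$.

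**Main obstacle.** The delicate point is justifying the resolvent identity $(\lambda\,I+L')\,(\lambda\,I+H)^{-1}u = u - K\,(\lambda\,I+H)^{-1}u$ — i.e.\ that $(\lambda\,I+H)^{-1}u$ genuinely lies in $D(L')$ and that $L'$ acts on it as $H-K$. This is exactly where the hypothesis $D(L')\supseteq D(H-K)=D(H)$ and the fact that $L'$ extends $H-K$ are used; one must also know $(\lambda\,I+L')^{-1}$ exists as a bounded positivity preserving operator for all $\lambda>0$, which follows from $-L'$ generating a positivity preserving contraction $C_0$-semigroup (or, if only boundedness is assumed, for $\lambda$ large and then the whole argument runs for such $\lambda$, which still suffices for Lemma~\ref{lkg234}). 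A secondary subtlety is extending the conclusion from $\gotC_1^+$ to all of $\gotC_1^{\rm sa}$, but this is routine given the inequality is between operators and we only claim an order relation, so no extension beyond the cone is actually needed.
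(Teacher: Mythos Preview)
Your argument is correct, but it follows a different route from the paper's.

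The paper never touches the Neumann series here. Instead it works directly with the regularisation $(K_\alpha)_{\alpha\in J}$: after shifting $H$ by a constant so that $(T'_t)$ is bounded, it observes that $\mathrm{Ran}\,(\lambda\,I+L_\alpha)^{-1}=D(H)\subset D(L')$, so the resolvent identity
\[
(\lambda\,I+L')^{-1}-(\lambda\,I+L_\alpha)^{-1}
=(\lambda\,I+L')^{-1}(L_\alpha-L')(\lambda\,I+L_\alpha)^{-1}
=(\lambda\,I+L')^{-1}(K-K_\alpha)(\lambda\,I+L_\alpha)^{-1}
\]
holds as a composition of three positivity preserving maps. Hence $(\lambda\,I+L_\alpha)^{-1}\leq(\lambda\,I+L')^{-1}$ for each $\alpha$, and passing to the limit via Theorem~\ref{tkg203}\ref{tkg203-5} yields the resolvent inequality, then Lemma~\ref{lkg234} finishes as in your proof.

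Your approach instead feeds the series representation of Lemma~\ref{lkg246} into an induction, using the identity $(\lambda\,I+H)^{-1}+(\lambda\,I+L')^{-1}K\,(\lambda\,I+H)^{-1}=(\lambda\,I+L')^{-1}$ on $\gotC_1^+$. This is exactly the style of Kato's original Lemma~9 in \cite{Kat8}. It has the merit of making minimality a visible consequence of the Neumann expansion itself and never mentions the $K_\alpha$ (their role is already absorbed into Lemma~\ref{lkg246}). The paper's proof is shorter and more in keeping with the regularisation theme of the section, since the single resolvent identity replaces your entire induction. Both routes need the same reduction to a bounded $(T'_t)$ that you correctly flag; note that the paper simply shifts $H$ at the outset rather than restricting to large~$\lambda$.
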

\begin{proof}
By adding a large constant to the operator $H$, we may assume that
$(T'_t)_{t > 0}$ is a bounded semigroup.
Let $\lambda > 0$ and $\alpha \in J$.
Note that the range
${\rm{Ran }}((\lambda \, I + L_{\alpha})^{-1}) = D(H) \subset D(L_{\alpha})\cap D(L')$.
Hence by the resolvent identity we have
\begin{eqnarray*}
(\lambda \, I + L')^{-1} - (\lambda \, I + L_\alpha)^{-1}
& = & (\lambda \, I + L')^{-1}(L_\alpha - L') (\lambda \, I + L_\alpha)^{-1}  \\
& = & (\lambda \, I + L')^{-1}(K - K_\alpha) (\lambda \, I + L_\alpha)^{-1}  \geq 0 ,
\end{eqnarray*}
since the resolvents and the operator $K - K_\alpha$ are positivity preserving.
Using Theorem~\ref{tkg203}\ref{tkg203-5} one gets
$(\lambda \, I + L')^{-1} \geq (\lambda \, I + L)^{-1}$.
Then the theorem is a consequence of
Lemma~\ref{lkg234} ~\ref{lkg234-2}$\Rightarrow$\ref{lkg234-1}.
\end{proof}

Theorem~\ref{tkg204} states similarly to \cite{Kat8} Lemma~9 that the semigroup
$(T_t)_{t > 0}$ constructed in Theorem \ref{tkg203} by
the functional regularisation $(K_\alpha)_{\alpha \in J}$ is \textit{minimal}.

\begin{cor} \label{ckg206}
Let $-H$ be the generator of a positivity preserving contraction $C_0$-semigroup
on $\gotC_1^{\rm sa}$.
Let $K \colon D(H) \to \gotC_1^{\rm sa}$ be a positivity preserving operator
and suppose that
\[
\Tr (Ku) \leq \Tr (H u)
\]
for all $u \in D(H)^+$.
Let $(K_\alpha)_{\alpha \in J}$ and $(K'_\alpha)_{\alpha \in J'}$
be two functional regularisations of $K$.
Let $(T_t)_{t > 0}$ and $(T'_t)_{t > 0}$ be the semigroups as in
Theorem~\ref{tkg203} using $(K_\alpha)_{\alpha \in J}$ and $(K'_\alpha)_{\alpha \in J'}$,
respectively.
Then $T_t = T'_t$ for all $t > 0$.
\end{cor}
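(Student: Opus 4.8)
The plan is to read Corollary~\ref{ckg206} off the minimality statement Theorem~\ref{tkg204} by applying it twice, with the roles of the two regularisations interchanged. Concretely, let $-L$ be the generator of $(T_t)_{t>0}$ produced from $(K_\alpha)_{\alpha \in J}$ and let $-L'$ be the generator of $(T'_t)_{t>0}$ produced from $(K'_\alpha)_{\alpha \in J'}$, both via Theorem~\ref{tkg203}. I would first note that each of $(T_t)_{t>0}$ and $(T'_t)_{t>0}$ is a positivity preserving $C_0$-semigroup (Theorem~\ref{tkg203}\ref{tkg203-3} and \ref{tkg203-4}), and that $L$ and $L'$ are both extensions of the operator $H-K$ with domain $D(H)$ (Theorem~\ref{tkg203}\ref{tkg203-6}).

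Next I would invoke Theorem~\ref{tkg204} with the ``minimal'' semigroup taken to be $(T_t)_{t>0}$ and with the arbitrary admissible extension taken to be $L'$: since $-L'$ generates a positivity preserving $C_0$-semigroup and $L'$ extends $H-K$ on $D(H)$, the theorem yields $T'_t \geq T_t$ for all $t>0$. The hypotheses of Theorem~\ref{tkg204} are symmetric in the two semigroups once one recognises that $(T_t)_{t>0}$ itself is an admissible competitor, so I would then apply the same theorem with the roles reversed — ``minimal'' semigroup $(T'_t)_{t>0}$, admissible extension $L$ — to get $T_t \geq T'_t$ for all $t>0$. Combining the two inequalities gives $T_t = T'_t$ for all $t>0$, which is the claim.

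The only point that genuinely needs care — and it is minor — is checking that the conclusions of Theorem~\ref{tkg203} indeed supply exactly the hypotheses demanded by Theorem~\ref{tkg204}: positivity preservation of the limit semigroup and the fact that its generator extends $H-K$ on the common domain $D(H)$. Both are already recorded in Theorem~\ref{tkg203}\ref{tkg203-3}, \ref{tkg203-4}, \ref{tkg203-6}, so there is no real obstacle; the corollary is a formal consequence of minimality applied symmetrically, together with the equivalence in Lemma~\ref{lkg234} if one prefers to phrase the sandwiching at the level of resolvents before passing back to the semigroups.
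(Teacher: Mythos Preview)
Your argument is correct and is exactly the approach the paper intends: the corollary is stated immediately after Theorem~\ref{tkg204} with no separate proof, precisely because it follows by applying minimality symmetrically to the two regularisations, as you do.
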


Thus the constructed semigroup is independent of the functional regularisation.

\medskip

We conclude this section by a condition which ensures that the
semigroup $(T_t)_{t > 0}$ constructed in Theorem~\ref{tkg203} is also
trace-preserving and hence is a Markov dynamical semigroup.

\begin{thm} \label{tkg205}
Adopt the notation and assumptions as in Theorem~\ref{tkg101}.
Suppose that
\begin{equation}\label{eqtk203;1}
\Tr(Hu - Ku) = 0
\end{equation}
for all $u \in D(H)$ and that $D(H)$ is a core for the generator $- L$, which is defined by 
Theorem \ref{tkg101}. Then the semigroup $(T_t)_{t > 0}$ is trace preserving.
\end{thm}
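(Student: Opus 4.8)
The plan is to differentiate $t \mapsto \Tr(T_t u)$ along the domain of the generator $-L$ and exploit that $\Tr \circ L$ vanishes there, the latter being hypothesis (\ref{eqtk203;1}) transported from the core $D(H)$ to all of $D(L)$.

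First I would note that, by Theorem~\ref{tkg101}\ref{tkg101-3} (equivalently Theorem~\ref{tkg203}\ref{tkg203-6}), the generator $-L$ of $(T_t)_{t>0}$ extends $-(H-K)$ on $D(H)$, so $Lu = Hu - Ku$ for every $u \in D(H)$; hence (\ref{eqtk203;1}) says exactly that $\Tr(Lu) = 0$ for all $u \in D(H)$. The next step is to upgrade this to $\Tr(Lu) = 0$ for all $u \in D(L)$, and this is where the core hypothesis is used: given $u \in D(L)$, pick $u_n \in D(H)$ with $u_n \to u$ and $L u_n \to L u$ in $\gotC_1$; since $\Tr$ is a bounded linear functional on $\gotC_1$, we get $\Tr(Lu) = \lim_n \Tr(L u_n) = 0$.

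With this in hand, fix $u \in D(L)$ and $t > 0$. Standard $C_0$-semigroup calculus gives $T_s u \in D(L)$ for all $s \geq 0$ together with the representation $T_t u - u = -\int_0^t L\, T_s u\, ds$ as a $\gotC_1$-valued Bochner integral. Applying the continuous functional $\Tr$ and using $\Tr(L\, T_s u) = 0$ (valid since $T_s u \in D(L)$) yields $\Tr(T_t u) = \Tr u$ for all $u \in D(L)$ and $t > 0$. Finally, since $-L$ is a generator, $D(L)$ is dense in $\gotC_1^{\rm sa}$, and $T_t$ and $\Tr$ are both continuous on $\gotC_1^{\rm sa}$, so the identity $\Tr(T_t u) = \Tr u$ extends to all $u \in \gotC_1^{\rm sa}$; that is, $(T_t)_{t>0}$ is trace preserving and hence a Markov dynamical semigroup.

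I expect the only delicate point to be the closure argument of the second paragraph — extending $\Tr(Lu) = 0$ from $D(H)$ to $D(L)$ — which is precisely what the core assumption on $D(H)$ is there to provide; the differentiation identity and the final density argument are routine.
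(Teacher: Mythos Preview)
Your proof is correct and follows essentially the same route as the paper: extend $\Tr(Lu)=0$ from $D(H)$ to $D(L)$ via the core hypothesis, use that $T_s$ leaves $D(L)$ invariant to conclude $\Tr(T_t u)=\Tr u$ on $D(L)$, and finish by density. The only cosmetic difference is that you invoke the integral identity $T_t u - u = -\int_0^t L\,T_s u\,ds$ whereas the paper differentiates $t\mapsto \Tr(T_t u)$ directly; both are standard and equivalent.
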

\begin{proof}
The proof is a variation of the proof of \cite{Dav17} Theorem~3.2.
Condition~(\ref{eqtk203;1}) states that $\Tr L u = 0$ for all $u \in D(H)$.
Because $D(H)$ is a core for $L$ one deduces that
$\Tr L u = 0$ for all $u \in D(L)$.
Let $u \in D(L)$.
Since the semigroup $(T_t)_{t > 0}$ maps $D(L)$ into $D(L)$,
one also gets $\Tr L \, T_t u = 0$ for all $t > 0$.
Then differentiability of the function $t \mapsto T_t u$ from
$(0,\infty)$ into $\gotC_1$ yields
$\partial_t \, \Tr T_t u  =  - \Tr L \, T_t u  = 0 $
for all $t > 0$.
Hence $\Tr T_t u = \Tr u$ for all $t > 0$.
Since $D(L)$ is dense in $\gotC_1^{\rm sa}$,
the latter also holds for all $u \in \gotC_1^{\rm sa}$.
\end{proof}

\section{Example}  \label{Skg3}

In this section we consider an example of a functional regularisation by boson-number
cut-off in a Fock space~$\sF$.
We construct in this way a dynamical semigroup which is minimal and Markovian.
The unperturbed positivity preserving
$C_0$-semigroup in the example has a (minus) generator which fails to be positivity
preserving.

\subsection{Open boson system}\label{Sskg3}

This example is motivated by the model of an open boson system studied in
\cite{TamuraZagrebnov2} and \cite{TamuraZagrebnov3}.

Let  $b$ and $b^*$ be the boson annihilation and the creation operators defined in the
Fock space $\sF$ generated by a cyclic vector $\Omega$.
That is, the Hilbert space $\sF$ has an orthonormal basis $(e_n)_{n \in \Ni_0}$
with $e_0 = \Omega$
and the Bose operators $b, b^*$ are defined by
\[
b \, e_n = \sqrt{n} \, e_{n-1}
\quad \mbox{and} \quad b^* \, e_n = \sqrt{n+1} \, e_{n+1}
  \]
for all $n \in \Ni_0$, with domain
$D(b) = D(b^*) = \{\psi \in \sF : \sum_{n=0}^\infty  n \, |(\psi, e_n )_{\sF}|^2 < \infty \} $,
where we set $e_{-1} = 0$.
The Bose operators satisfy the commutation relation
$(b \, b^* - b^* \, b) \psi = \psi$ for all $\psi \in D(b^* \, b)$.

The \textit{isolated} system that we consider is a one-mode quantum oscillator with equidistant
discrete spectrum with
spacing $E >0$ defined by
\begin{equation} \label{h-E}
h = E \, b^* \, b
\end{equation}
and domain
\[
D(h)  =
\{\psi \in \sF : \sum_{n=0}^\infty  n^2 \, |(\psi, e_n )_{\sF}|^2 < \infty \}
.  \]
The {\bf number operator}
\[
\hat n := b^* \, b ,
\]
with $D(\hat n ) =D(h) \subset \mathcal{F}$,
counts the number of bosons $(\hat n  \, \psi, \psi)_{\sF}$
in a normalised quantum state vector $\psi \in \sF$, that is $\|\psi\|_{\sF} = 1$.

We consider $\gotC_1 = \gotC_1(\sF)$, the complex Banach space of trace-class operators on
$\sF$ with trace-norm $\|\cdot\|_{\gotC_1}$.
Its dual space is isometrically isomorphic to the Banach space of
all bounded operators $\cl(\sF)$.
The corresponding dual pair is determined by the bilinear trace functional
\begin{equation} \label{dual-Tr}
\langle \phi \mid  A \rangle_{\gotC_1(\sF) \times \cl(\sF)} = \Tr (\phi \, A) ,
\end{equation}
where $\phi \in \gotC_1(\sF)$ and  $A \in \cl(\sF)$.

The quantum-mechanical Hamiltonian evolution of the isolated system (\ref{h-E}) is determined
by the unitary group $(U_{ih}(t))_{t \in \Ri}$, where
$U_{ih}(t) = e^{- i t h} \in \cl(\sF)$ for all $t \in \Ri$.
For all $t \in (0,\infty)$ define $W_t \colon \gotC_1^{\rm sa} \to \gotC_1^{\rm sa}$ by
\begin{equation}\label{I-SG}
W_t \rho = U_{ih}(t) \, \rho \, U_{ih}(t)^* .
\end{equation}
Then $(W_t)_{t > 0}$ is evidently a contraction $C_0$-semigroup, which is positivity preserving
and trace preserving.
The semigroup $(W_t)_{t > 0}$ is called
the Markov dynamical (semi)group for the evolution of the
isolated system (\ref{h-E}).
Let $-L$ be the generator of $(W_t)_{t > 0}$.
Define $\Psi \colon \gotC_1^{\rm sa} \to \gotC_1^{\rm sa}$  by
\[
\Psi(\rho) = (I + \hat n )^{-1} \, \rho \, (I + \hat n )^{-1}
.  \]
Then $\Psi(\gotC_1^{\rm sa}) \subset D(L)$ and
\[
L \Psi(\rho_0)
= i \, h \, (I + \hat n )^{-1} \, \rho_0 \, (I + \hat n )^{-1}
   - i \, (I + \hat n )^{-1} \, \rho_0 \, h \, (I + \hat n )^{-1}
\]
for all $\rho_0 \in \gotC_1^{\rm sa}$.
Note that
\[
L \rho \supset i \, [h, \rho]
\]
for all $\rho \in \Psi(\gotC_1^{\rm sa})$.

To illustrate an \textit{open} system corresponding to (\ref{h-E}), we consider the simplest
model when this system is in contact with an external reservoir of bosons $b, b^*$.
Then to describe the evolution of this open system
we follow the Kossakowski--Lindblad--Davies (KLD) Ansatz for the dissipative extension of the
Hamiltonian positivity preserving dynamics (\ref{I-SG}) to a non-Hamiltonian
positivity preserving evolution.

Fix $\sigma_{\pm} \in [0,\infty)$.
Define the operator $\cQ \colon D(\cQ) \to \gotC_1^{\rm sa}$
with domain $D(\cQ) = \Psi(\gotC_1^{\rm sa})$ by
\begin{equation}\label{eqK-L-D1}
\cQ \rho
= \sigma_- \, \Big( b \, (I + \hat n )^{-1/2} \Big)  \, \rho_0 \,
              \Big( b \, (I + \hat n )^{-1/2} \Big)^*
    + \sigma_+ \, \Big( b^*\, (I + \hat n )^{-1/2} \Big) \rho_0  \,
                  \Big( b^*\, (I + \hat n )^{-1/2} \Big)^*,
\end{equation}
where $\rho_0 \in \gotC_1^{\rm sa}$ is such that $\rho = \Psi(\rho_0)$.
Note that
\[
\cQ \rho \supset \sigma_- \, b \, \rho \, b^* + \sigma_+ \, b^*\, \rho \, b
\]
and that the operator $\rho \mapsto \cQ \Psi(\rho)$ is continuous
from $\gotC_1^{\rm sa}$ into $\gotC_1^{\rm sa}$.
Since $\hat n$ is densely defined, it is not hard to show that
$\Psi(\gotC_1^{\rm sa}) \cap \gotC_1^+ = \Psi(\gotC_1^+)$.
Hence $\cQ$ is a positivity preserving operator.

Using the bilinear trace functional (\ref{dual-Tr}), the
dual operator $\cQ^*$ acts in $\cl(\sF)$.
It is defined via the relation
$\langle \cQ\,\rho \mid  A \rangle_{\gotC_1(\sF) \times \cl(\sF)}
= \langle \rho \mid \cQ^*(A) \rangle_{\gotC_1(\sF) \times \cl(\sF)}$.
If $A_0 \in \cl(\sF)$ and $A = (I + \hat n )^{-1/2} \, A_0 \, (I + \hat n )^{-1/2}$,
then $A \in D(\cQ^*)$ and
\[
\cQ^*(A) \supset \sigma_- \, b^* \, A \, b + \sigma_+ \, b \, A \, b^*
.  \]
If $\sigma_+ + \sigma_- > 0$ then clearly $I \not\in D(\cQ^*)$.

The non-Hamilton\-ian evolution equation
$\partial_t\rho(t) = - \widetilde L_\sigma \rho(t)$
is defined formally in the framework of the KLD Ansatz
with the generator $- \widetilde L_\sigma$, where
\[
\widetilde L_\sigma \rho = i \, [h, \rho] +
\frac{1}{2} (\cQ^*(I) \rho + \rho \, \cQ^* (I))  - \, \cQ \rho
\]
and formally $\cQ^*(I) = \sigma_- \, b^* \, b + \sigma_+ \, b \, b^*$.
Therefore formally
\begin{equation}\label{K-L-D-Generator}
\widetilde L_\sigma \rho
= i \, [h, \rho] +
\frac{1}{2} \Big( (\sigma_- \, b^* \, b + \sigma_+ \, b \, b^*) \rho
                    + \rho \, (\sigma_- \, b^* \, b + \sigma_+ \, b \, b^*) \Big)  - \, \cQ \rho .
\end{equation}
We aim to give a mathematical sense of (\ref{K-L-D-Generator}) and to define
the corresponding semigroup.

To proceed we first consider the operator
$h_\sigma \colon D(\hat n) \to \gotC_1^{\rm sa}$  defined by
\[
h_\sigma  = i \, h + \tfrac{1}{2} \, (\sigma_- \, b^* \, b + \sigma_+ \, b \, b^*)
.  \]
Then $h_\sigma$ is an $m$-accretive operator.
Define $U_{h_\sigma }(t) = e^{- t \, h_\sigma } \in \cl(\sF)$ for all $t \in [0,\infty)$.
Then similarly to (\ref{I-SG}) the contraction $C_0$-semigroup $(U_{h_\sigma }(t))_{t > 0}$ induces
on the Banach space $\gotC_1^{\rm sa}$ a positivity preserving contraction $C_0$-semigroup
$(S^\sigma_t)_{t > 0}$ given by
\[
S^\sigma_t\rho = U_{h_\sigma }(t) \, \rho \, U_{h_\sigma }(t)^* .
\]
Let $-H_\sigma$ be the generator of the semigroup $(S^\sigma_t)_{t > 0}$.
Then
$ D(H_\sigma) \supset \Psi(\gotC_1^{\rm sa})$.
If $\rho \in \Psi(\gotC_1^{\rm sa})$, then
\begin{equation}\label{h-sigma}
H_\sigma \rho
\supset i \, [h, \rho]
   + \frac{1}{2} \Big( (\sigma_- \, b^* \, b + \sigma_+ \, b \, b^*) \rho
                    + \rho \, (\sigma_- \, b^* \, b + \sigma_+ \, b \, b^*) \Big) .
\end{equation}
Moreover, the map $\rho \mapsto H_\sigma \Psi(\rho)$ is continuous
from $\gotC_1^{\rm sa}$ into $\gotC_1^{\rm sa}$.
Also, if $\rho \in \Psi(\gotC_1^+)$, then $\Tr H_\sigma  \rho \geq 0$.
Since $S^\sigma_t$ commutes with the operator
$\Psi$, one deduces that
\[
S^\sigma_t \Psi(\gotC_1^{\rm sa}) \subset \Psi(\gotC_1^{\rm sa}) .
\]
Hence $\Psi(\gotC_1^{\rm sa})$ is a core for operator $H_\sigma$.

Note that whenever $\sigma_- + \sigma_+ > 0$, the semigroup $(S^\sigma_t )_{t > 0}$ is \textit{not}
trace-preserving.
Indeed, if $\rho \in \gotC_1^+$ is given by
$\rho(\varphi) = (\varphi,e_1)_{\sF} \, e_1$, then
$H_\sigma \rho = (\sigma_- + 2 \sigma_+) \rho$.
Hence $S^\sigma_t \rho = e^{- (\sigma_- + 2 \sigma_+) t} \rho$ and
$\Tr (S^\sigma_t \rho) = e^{- (\sigma_- + 2 \sigma_+) t}$ for all $t > 0$.

\begin{remark} \label{rkg320}
The operator $H_\sigma$ is not positivity preserving, even although the semigroup $(S^\sigma_t)_{t > 0}$
is positivity preserving.
An example is as follows.
For simplicity assume that $E = 1$.
Using the commutation relation $(b \, b^* - b^* \, b) \psi = \psi$ for all $\psi \in D(b^* \, b)$,
one deduces that
\[
H_\sigma \rho
\supset i \, (\hat n \, \rho - \rho \, \hat n)
   + \frac{1}{2} (\sigma_- + \sigma_+) (\hat n \, \rho + \rho \, \hat n)
   + \sigma_+ \, \rho
\]
for all $\rho \in \Psi(\gotC_1^{\rm sa})$.
Let $k \in \Ni$ and $\lambda > 0$.
Choose $\psi = e_1 + i \, \lambda \, e_k$.
Define $\rho \in \Psi(\gotC_1^{\rm sa})^+$ by $\rho(\varphi) = (\varphi,\psi)_\sF \, \psi$.
Then
\begin{eqnarray*}
(H_\sigma \rho) \varphi
& = & i \Big( (\varphi,\psi)_\sF \, \hat n \, \psi - (\hat n \, \varphi,\psi)_\sF \, \psi \Big)  \\*
& & \hspace*{10mm} {}
   + (\sigma_- + \sigma_+)
        \Big( (\varphi,\psi)_\sF \, \hat n \, \psi + (\hat n \, \varphi,\psi)_\sF \, \psi \Big)
   + \sigma_+ \, (\varphi,\psi)_\sF \, \psi
\end{eqnarray*}
for all $\varphi \in D(\hat n)$.
So
\begin{eqnarray*}
((H_\sigma \rho) \varphi, \varphi)_\sF
& = & - 2 \IIm \Big( (\varphi,\psi)_\sF \, (\hat n \, \psi,\varphi)_\sF \Big) \\*
& & \hspace*{10mm} {}
      + (\sigma_- + \sigma_+)
           \RRe \Big( (\varphi,\psi)_\sF \, (\hat n \, \psi,\varphi)_\sF \Big)
   + \sigma_+ \, |(\varphi,\psi)_\sF|^2
.
\end{eqnarray*}
Now choose $\varphi = e_1 + e_k$.
Then
$(\varphi,\psi)_\sF \, (\hat n \, \psi,\varphi)_\sF
= (1-i \, \lambda)(1 + i \, k \, \lambda)
= 1 + k \, \lambda^2 + i \, (k-1) \, \lambda$.
So
\[
((H_\sigma \rho) \varphi, \varphi)_\sF
= - 2 (k-1) \, \lambda
   + (\sigma_- + \sigma_+) (1 + k \, \lambda^2)
   + \sigma_+ \, (1 + \lambda^2)
.  \]
Choose $\lambda > 0$ such that $\lambda \, (\sigma_- + \sigma_+) < 1$.
Then $((H_\sigma \rho) \varphi, \varphi)_\sF < 0$ for large $k \in \Ni$.
Therefore the operator $H_\sigma \rho$ is not positive and the
operator $H_\sigma$ is not positivity preserving.
\end{remark}

\subsection{A particle-number cut-off regularisation }\label{RvPNC-O}

To make precise the meaning of the operator formally introduced
in (\ref{K-L-D-Generator}) we use
(\ref{h-sigma}) and the next two lemmata for an extension of~$\cQ$.
The first lemma is about boundedness of operators.

\begin{lemma} \label{lkato300.5}
Let $A \colon \Psi(\gotC_1^{\rm sa}) \to \gotC_1^{\rm sa}$ be a positivity
preserving operator and assume that $\Tr A \rho \leq \Tr \rho$ for all $\rho \in \Psi(\gotC_1^+)$.
Then $\|A \rho\|_{\gotC_1} \leq \|\rho\|_{\gotC_1}$ for all
$\rho \in \Psi(\gotC_1^{\rm sa})$.
\end{lemma}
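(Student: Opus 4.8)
The plan is to combine the Jordan decomposition of Lemma~\ref{lkg200.5}\ref{lkg200.5-1} with a finite-rank approximation, and to transport everything through the auxiliary map $\Phi:=A\circ\Psi$. First I would set $B:=(I+\hat n)^{-1}$; since $I+\hat n\geq I$, $B$ is a positive contraction on $\sF$, so $\|\Psi(\tau)\|_{\gotC_1}=\|B\tau B\|_{\gotC_1}\leq\|\tau\|_{\gotC_1}$ for all $\tau\in\gotC_1^{\rm sa}$, i.e. $\Psi$ is a bounded linear operator on $\gotC_1^{\rm sa}$. Since $\dom A=\Psi(\gotC_1^{\rm sa})$, the composition $\Phi:=A\circ\Psi\colon\gotC_1^{\rm sa}\to\gotC_1^{\rm sa}$ is a well-defined linear operator, and it is positivity preserving: for $\tau\in\gotC_1^+$ one has $\Psi(\tau)\in\Psi(\gotC_1^+)\subset\Psi(\gotC_1^{\rm sa})$ with $\Psi(\tau)\geq0$, hence $\Phi(\tau)=A\Psi(\tau)\geq0$, and moreover $\Tr\Phi(\tau)\leq\Tr\Psi(\tau)\leq\Tr\tau$ by the hypothesis on $A$. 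Decomposing $\tau\in\gotC_1^{\rm sa}$ as $\tau=v-w$ with $v,w\in\gotC_1^+$, $v+w=|\tau|$ then gives $\|\Phi(\tau)\|_{\gotC_1}\leq\Tr\Phi v+\Tr\Phi w\leq\Tr v+\Tr w=\|\tau\|_{\gotC_1}$, so $\Phi$ is bounded with $\|\Phi\|\leq1$. This already yields $\|A\rho\|_{\gotC_1}\leq\|\rho_0\|_{\gotC_1}$ whenever $\rho=\Psi(\rho_0)$, but that bound is in general strictly weaker than the asserted one.

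The heart of the proof is a reduction to finite-rank operators, for which the Jordan parts stay inside $\Psi(\gotC_1^{\rm sa})$. Fix $\rho\in\Psi(\gotC_1^{\rm sa})$ and write $\rho=\Psi(\rho_0)$ with $\rho_0\in\gotC_1^{\rm sa}$. Let $P_n$ be the orthogonal projection of $\sF$ onto $\spann\{e_0,\ldots,e_n\}$; since $\hat n$, hence $B$, commutes with $P_n$, one has $\rho_n:=P_n\rho P_n=\Psi(P_n\rho_0 P_n)\in\Psi(\gotC_1^{\rm sa})$, and $P_n\rho_0 P_n\to\rho_0$ in $\gotC_1$, so $\rho_n\to\rho$ in $\gotC_1$ by continuity of $\Psi$. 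Now $\rho_n$ is a finite-rank self-adjoint operator supported in ${\rm Ran}\,P_n$, a finite-dimensional subspace on which $I+\hat n$ restricts to a bounded invertible operator $C_n:=P_n(I+\hat n)P_n$. Writing $\rho_n=\rho_n^{+}-\rho_n^{-}$ with $\rho_n^{\pm}\in\gotC_1^+$ and $\rho_n^{+}+\rho_n^{-}=|\rho_n|$ (Lemma~\ref{lkg200.5}\ref{lkg200.5-1}), both $\rho_n^{\pm}$ are finite-rank operators supported in ${\rm Ran}\,P_n$, so $C_n\rho_n^{\pm}C_n\geq0$ is finite-rank and $\rho_n^{\pm}=\Psi(C_n\rho_n^{\pm}C_n)\in\Psi(\gotC_1^{\rm sa})\cap\gotC_1^+=\Psi(\gotC_1^+)$. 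Hence the hypothesis applies to each $\rho_n^{\pm}$: as $A$ is positivity preserving, $A\rho_n^{\pm}\in\gotC_1^+$ and $\|A\rho_n^{\pm}\|_{\gotC_1}=\Tr A\rho_n^{\pm}\leq\Tr\rho_n^{\pm}$. By linearity of $A$,
\[
\|A\rho_n\|_{\gotC_1}\leq\|A\rho_n^{+}\|_{\gotC_1}+\|A\rho_n^{-}\|_{\gotC_1}\leq\Tr\rho_n^{+}+\Tr\rho_n^{-}=\Tr|\rho_n|=\|\rho_n\|_{\gotC_1}.
\]

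Finally I would pass to the limit $n\to\infty$. Since $\rho_n=\Psi(P_n\rho_0 P_n)$ and $P_n\rho_0 P_n\to\rho_0$ in $\gotC_1$, boundedness of $\Phi$ gives $A\rho_n=\Phi(P_n\rho_0 P_n)\to\Phi(\rho_0)=A\rho$ in $\gotC_1$, so $\|A\rho_n\|_{\gotC_1}\to\|A\rho\|_{\gotC_1}$; and $\|\rho_n\|_{\gotC_1}\to\|\rho\|_{\gotC_1}$. Taking the limit in the displayed inequality yields $\|A\rho\|_{\gotC_1}\leq\|\rho\|_{\gotC_1}$, as required.

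The main obstacle — and the reason a one-line argument fails — is that $A$ is defined only on the proper subspace $\Psi(\gotC_1^{\rm sa})$ and is not a priori continuous there, while the Jordan parts $\rho^{\pm}$ of a general $\rho\in\Psi(\gotC_1^{\rm sa})$ need not lie in $\Psi(\gotC_1^{\rm sa})$, so one cannot simply feed them into $A$ and add. Truncating to the finite-rank blocks $P_n\rho P_n$ repairs this, because their positive and negative parts live where $I+\hat n$ is bounded and hence do belong to $\Psi(\gotC_1^+)$; and the passage to the limit is legitimate precisely because the truncations converge already at the level of the pre-images $\rho_0$ under the bounded map $\Psi$, which is what makes $A\rho_n\to A\rho$ available even though $A$ itself is not known to be continuous.
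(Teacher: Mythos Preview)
Your proof is correct, but it follows a different route from the paper's.

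The paper produces, for each $\varepsilon>0$, an \emph{approximate} Jordan decomposition of $\rho$ itself inside $\Psi(\gotC_1^+)$: writing $\rho=\Psi(\rho_0)$, it forms $\rho_t=(I+t\hat n)(I+\hat n)^{-1}\rho_0(I+\hat n)^{-1}(I+t\hat n)$, takes the exact Jordan decomposition $\rho_t=v-w$, and then squeezes back via $(I+t\hat n)^{-1}$ to obtain $\rho_1,\rho_2\in\Psi(\gotC_1^+)$ with $\rho=\rho_1-\rho_2$ and $\Tr\rho_1+\Tr\rho_2\leq\|\rho_t\|_{\gotC_1}\leq\|\rho\|_{\gotC_1}+\varepsilon$ for $t$ small. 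The desired inequality then falls out in one line, with no limiting argument on $A$ and no need for the auxiliary operator $\Phi$.

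Your argument instead (i) first establishes that $\Phi=A\circ\Psi$ is globally bounded on $\gotC_1^{\rm sa}$, and (ii) uses the discrete spectral truncations $P_n\rho P_n$, whose Jordan parts lie \emph{exactly} in $\Psi(\gotC_1^+)$ because $I+\hat n$ is bounded on $\mathrm{Ran}\,P_n$; the passage $n\to\infty$ is then justified through the continuity of $\Phi$ at the level of the preimages $P_n\rho_0 P_n$. This is a genuine alternative: the paper's device avoids any appeal to continuity of $A$ (or of $\Phi$) by working with a decomposition of $\rho$ itself, at the price of an $\varepsilon$; your device yields exact inequalities for each $n$ but must manufacture the continuity needed to take the limit, which the boundedness of $\Phi$ supplies. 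Both hinge on the same structural fact---that $\hat n$ is diagonal in the basis $(e_k)$---exploited respectively through the resolvent family $(I+t\hat n)^{-1}$ and the spectral projections $P_n$.
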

\begin{proof}
\firststep \label{lkato300.5:1}
Let $\rho \in \Psi(\gotC_1^{\rm sa})$ and $\varepsilon > 0$.
We first show that there exist $\rho_1,\rho_2 \in \Psi(\gotC_1^+)$
such that $\rho = \rho_1 - \rho_2$ and
$\Tr \rho_1 + \Tr \rho_2 \leq \|\rho\|_{\gotC_1} + \varepsilon$.

The proof is a modification of \cite{Dav17} Lemma~2.1.
By assumption there exists a $\rho_0 \in \gotC_1^{\rm sa}$ such that
$\rho = (I + \hat n)^{-1} \, \rho_0 \, (I + \hat n)^{-1}$.
For all $t > 0$ define
\[
\rho_t
= (I + t \, \hat n) \, (I + \hat n)^{-1} \, \rho_0 \, (I + t \, \hat n) \, (I + \hat n)^{-1}
.  \]
Then $\rho_t \in \gotC_1^{\rm sa}$.
Moreover, $\lim_{t \downarrow 0} \rho_t = \rho$ in $\gotC_1$.
Hence there exists a $t > 0$ such that
$\|\rho_t\|_{\gotC_1} \leq \|\rho\|_{\gotC_1} + \varepsilon$.

By Lemma~\ref{lkg200.5}\ref{lkg200.5-1} there exist $v,w \in \gotC_1^+$ such that
$\rho_t = v-w$ and
$|\rho_t| = v+w$.
Write
\[
\rho_1 = (I + t \, \hat n)^{-1} \, v \, (I + t \, \hat n)^{-1}
\quad \mbox{and} \quad
\rho_2 = (I + t \, \hat n)^{-1} \, w \, (I + t \, \hat n)^{-1}
.  \]
Then $\rho_1,\rho_2 \in \Psi(\gotC_1^+)$ and
\[
\rho_1 - \rho_2
= (I + t \, \hat n)^{-1} \, (v-w) \, (I + t \, \hat n)^{-1}
= (I + t \, \hat n)^{-1} \, \rho_t \, (I + t \, \hat n)^{-1}
= (I + \hat n)^{-1} \, \rho_0 \, (I + \hat n)^{-1}
= \rho
.  \]
Moreover,
\begin{eqnarray*}
\Tr \rho_1 + \Tr \rho_2
& = & \Tr \Big( (I + t \, \hat n)^{-1} \, (v+w) \, (I + t \, \hat n)^{-1} \Big) \\
& = & \|(I + t \, \hat n)^{-1} \, (v+w) \, (I + t \, \hat n)^{-1}\|_{\gotC_1}  \\
& \leq & \|v+w\|_{\gotC_1}
= \| \, |\rho_t| \, \|_{\gotC_1}
= \|\rho_t\|_{\gotC_1}
\leq \|\rho\|_{\gotC_1} + \varepsilon
\end{eqnarray*}
as required.

\nextstep\
Now we prove the lemma.
Let $\rho \in \Psi(\gotC_1^{\rm sa})$.
Let $\varepsilon > 0$ and let $\rho_1,\rho_2 \in \Psi(\gotC_1^+)$
be as in Step~\ref{lkato300.5:1}.
Then
\[
\|A \rho\|_{\gotC_1}
\leq \|A \rho_1\|_{\gotC_1} + \|A \rho_2\|_{\gotC_1}
= \Tr A \rho_1 + \Tr A \rho_2
\leq \Tr \rho_1 + \Tr \rho_2
\leq \|\rho\|_{\gotC_1} + \varepsilon
\]
and the lemma follows.
\end{proof}

\begin{lemma} \label{lkato301}
The operator $\cQ$ extends uniquely
to a continuous operator $\widehat \cQ \colon D(H_\sigma) \to \gotC_1^{\rm sa}$,
where $D(H_\sigma)$ is provided with the graph norm.
Moreover, $\widehat \cQ$ is positivity preserving,
\[
\Tr (H_\sigma \rho - \widehat \cQ \rho) = 0
\]
and $\|\widehat \cQ \rho\|_{\gotC_1} \leq \|H_\sigma \rho\|_{\gotC_1}$
for all $\rho \in D(H_\sigma)$.
\end{lemma}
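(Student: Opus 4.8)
The plan is to prove, on the core $\Psi(\gotC_1^{\rm sa})$, both the identity $\Tr(H_\sigma\rho-\cQ\rho)=0$ and the bound $\|\cQ\rho\|_{\gotC_1}\le\|H_\sigma\rho\|_{\gotC_1}$, and then to extend $\cQ$ to $D(H_\sigma)$ by density, handling the positivity preserving property by a separate regularisation of the argument. First I would settle the trace identity. For $\rho\in\Psi(\gotC_1^{\rm sa})$ the operators $\cQ\rho$ and $\sigma_-\,b\rho b^{*}+\sigma_+\,b^{*}\rho b$ are both trace-class and, by the remark after $(\ref{eqK-L-D1})$, coincide, while $H_\sigma\rho=i\,[h,\rho]+\tfrac{1}{2}\bigl((\sigma_-b^{*}b+\sigma_+bb^{*})\rho+\rho\,(\sigma_-b^{*}b+\sigma_+bb^{*})\bigr)$ by $(\ref{h-sigma})$; since $\Tr[h,\rho]=0$ and, by cyclicity, $\Tr(b\rho b^{*})=\Tr(b^{*}b\rho)$ and $\Tr(b^{*}\rho b)=\Tr(bb^{*}\rho)$, one obtains $\Tr(H_\sigma\rho-\cQ\rho)=0$ for all $\rho\in\Psi(\gotC_1^{\rm sa})$, and in particular $\Tr\cQ\rho\le\Tr H_\sigma\rho$ on $\Psi(\gotC_1^+)$.

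Next I would establish the $\gotC_1$-bound on the core. Fix $\lambda>0$. Because $(\lambda I+H_\sigma)^{-1}=\int_0^{\infty}e^{-\lambda t}S^{\sigma}_t\,dt$ is positivity preserving and commutes with $\Psi$, it maps $\Psi(\gotC_1^{\rm sa})$ into $\Psi(\gotC_1^{\rm sa})=D(\cQ)$ and $\Psi(\gotC_1^+)$ into $\Psi(\gotC_1^+)$, so $A:=\cQ\,(\lambda I+H_\sigma)^{-1}$ is a positivity preserving operator on $\Psi(\gotC_1^{\rm sa})$; applying the trace identity to $(\lambda I+H_\sigma)^{-1}\sigma$ gives $\Tr A\sigma=\Tr H_\sigma(\lambda I+H_\sigma)^{-1}\sigma=\Tr\sigma-\lambda\,\Tr(\lambda I+H_\sigma)^{-1}\sigma\le\Tr\sigma$ for $\sigma\in\Psi(\gotC_1^+)$. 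Lemma~\ref{lkato300.5} then yields $\|\cQ\,(\lambda I+H_\sigma)^{-1}\sigma\|_{\gotC_1}\le\|\sigma\|_{\gotC_1}$ for all $\sigma\in\Psi(\gotC_1^{\rm sa})$. For $\rho=\Psi(\rho_0)\in\Psi(\gotC_1^{\rm sa})$ with $(\lambda I+H_\sigma)\rho\in\Psi(\gotC_1^{\rm sa})$ this gives $\cQ\rho=\cQ\,(\lambda I+H_\sigma)^{-1}\bigl((\lambda I+H_\sigma)\rho\bigr)$, hence $\|\cQ\rho\|_{\gotC_1}\le\lambda\|\rho\|_{\gotC_1}+\|H_\sigma\rho\|_{\gotC_1}$; the set of such $\rho_0$ equals $D(H_\sigma)$ (again using that $(\lambda I+H_\sigma)^{-1}$ commutes with the injective map $\Psi$), which is dense in $\gotC_1^{\rm sa}$. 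Since $\rho_0\mapsto\Psi(\rho_0)$, $\rho_0\mapsto H_\sigma\Psi(\rho_0)$ and $\rho_0\mapsto\cQ\Psi(\rho_0)$ are continuous, the inequality extends to every $\rho\in\Psi(\gotC_1^{\rm sa})$, and letting $\lambda\downarrow0$ gives $\|\cQ\rho\|_{\gotC_1}\le\|H_\sigma\rho\|_{\gotC_1}$ there.

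Since $\Psi(\gotC_1^{\rm sa})$ is a core for the closed operator $H_\sigma$, this bound shows $\cQ$ is bounded for the graph norm of $H_\sigma$ on a dense subspace, so it extends uniquely to a continuous $\widehat\cQ\colon D(H_\sigma)\to\gotC_1^{\rm sa}$ with $\|\widehat\cQ\rho\|_{\gotC_1}\le\|H_\sigma\rho\|_{\gotC_1}$; passing to the graph-norm limit in the trace identity, together with continuity of $\Tr$, gives $\Tr(H_\sigma\rho-\widehat\cQ\rho)=0$ for all $\rho\in D(H_\sigma)$. For the positivity preserving property, take $\rho\in D(H_\sigma)\cap\gotC_1^{+}$ and set $\rho_t:=(I+t\,\hat n)^{-1}\rho(I+t\,\hat n)^{-1}$ for $t>0$; since $(I+\hat n)(I+t\,\hat n)^{-1}$ is bounded, $\rho_t\in\Psi(\gotC_1^{\rm sa})$ and $\rho_t\ge0$, while $\rho_t\to\rho$ in $\gotC_1$, and because $h_\sigma$ is a function of $\hat n$, conjugation by $(I+t\,\hat n)^{-1}$ commutes with $U_{h_\sigma}$, hence with $S^{\sigma}_t$, hence with $(\mu I+H_\sigma)^{-1}$ and with $H_\sigma$, so $H_\sigma\rho_t=(I+t\,\hat n)^{-1}(H_\sigma\rho)(I+t\,\hat n)^{-1}\to H_\sigma\rho$ in $\gotC_1$. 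Thus $\rho_t\to\rho$ in graph norm, $\widehat\cQ\rho=\lim_{t\downarrow0}\cQ\rho_t$, and as each $\cQ\rho_t\ge0$ and $\gotC_1^{+}$ is closed, $\widehat\cQ\rho\ge0$.

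The main obstacle will be the $\gotC_1$-bound of the second paragraph: Lemma~\ref{lkato300.5} applies directly only to $\cQ\,(\lambda I+H_\sigma)^{-1}$, and transferring its contractivity to $\cQ\rho$ itself requires inverting $\lambda I+H_\sigma$, which in general does not send $\gotC_1^{\rm sa}$ back into the core $\Psi(\gotC_1^{\rm sa})=D(\cQ)$; the density argument involving $D(H_\sigma)$ is what bridges this gap. The trace computation and the two limit passages are otherwise routine, given the properties of $\Psi$, $H_\sigma$ and $S^{\sigma}$ recorded in Subsection~\ref{Sskg3}.
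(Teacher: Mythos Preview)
Your proof is correct and follows essentially the same route as the paper: both arguments establish the trace identity $\Tr(H_\sigma\rho-\cQ\rho)=0$ on $\Psi(\gotC_1^{\rm sa})$, apply Lemma~\ref{lkato300.5} to the positivity preserving map $\cQ\,(\lambda I+H_\sigma)^{-1}|_{\Psi(\gotC_1^{\rm sa})}$ to obtain the bound $\|\cQ\rho\|_{\gotC_1}\le\lambda\|\rho\|_{\gotC_1}+\|H_\sigma\rho\|_{\gotC_1}$, extend to $D(H_\sigma)$ by density (the paper does this by first extending $\cQ(\lambda I+H_\sigma)^{-1}$ to an operator $E_\lambda$ on all of $\gotC_1^{\rm sa}$ and then setting $\widehat\cQ=E_\lambda(\lambda I+H_\sigma)$; you instead pass the graph-norm bound through the core directly), and finally let $\lambda\downarrow0$.

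The one genuinely different ingredient is your treatment of the positivity preserving property. The paper approximates $\rho\in D(H_\sigma)^+$ via the semigroup, using $S^\sigma_t\rho$ together with a density argument for $\Psi(\gotC_1^+)$; you instead use the concrete regularisation $\rho_t=(I+t\,\hat n)^{-1}\rho\,(I+t\,\hat n)^{-1}$ and exploit that $h_\sigma$ is a function of $\hat n$, so that conjugation by $(I+t\,\hat n)^{-1}$ commutes with $H_\sigma$ and $\rho_t\to\rho$ in graph norm with $\rho_t\in\Psi(\gotC_1^+)$. This is a clean and self-contained alternative: it makes the graph-norm approximation of positive elements by elements of $\Psi(\gotC_1^+)$ completely explicit, at the cost of using the model-specific fact that $h_\sigma$ commutes with $\hat n$. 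Both approaches are valid.
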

\begin{proof}
Note that
\begin{equation} \label{rel-bound-1}
\Tr (H_\sigma  \rho - \cQ \rho) = 0
\end{equation}
for all $\rho \in \Psi(\gotC_1^{\rm sa})$.
Let $\lambda > 0$.
The resolvent
\[
(\lambda \, I + H_\sigma)^{-1} = \int_0^\infty e^{- t \, \lambda} \, S^\sigma_t \, dt
\]
is positivity preserving and
$(\lambda \, I + H_\sigma)^{-1} \Psi(\gotC_1^{\rm sa}) \subset \Psi(\gotC_1^{\rm sa})$
since $S^\sigma_t$ commutes with the operator $\Psi$.
Then the map
$\cQ (\lambda \, I + H_\sigma)^{-1}|_{\Psi(\gotC_1^{\rm sa})}
      \colon \Psi(\gotC_1^{\rm sa}) \to  \gotC_1^{\rm sa}$ is
also positivity preserving.
Moreover, (\ref{rel-bound-1}) yields
\begin{eqnarray*}
\Tr \Big( \cQ (\lambda \, I + H_\sigma)^{-1} \rho \Big)
& = & \Tr \Big( H_\sigma (\lambda \, I + H_\sigma)^{-1} \rho \Big)  \nonumber \\
& = & \Tr \rho - \lambda \Tr (\lambda \, I + H_\sigma)^{-1} \rho
\leq \Tr \rho
\end{eqnarray*}
for all $\rho \in \Psi(\gotC_1^+) = \Psi(\gotC_1^{\rm sa}) \cap \gotC_1^+$.
Hence the operator $\cQ (\lambda \, I + H_\sigma)^{-1}|_{\Psi(\gotC_1^{\rm sa})}$
is bounded by Lemma~\ref{lkato300.5}, with norm at most~$1$.
Since $\Psi(\gotC_1^{\rm sa})$ is dense in $\gotC_1^{\rm sa}$ one deduces that
the operator $\cQ (\lambda \, I + H_\sigma)^{-1}|_{\Psi(\gotC_1^{\rm sa})}$
has a unique bounded extension $E_\lambda \colon  \gotC_1^{\rm sa} \to \gotC_1^{\rm sa}$,
which is a positivity preserving operator.
Then $\|E_\lambda\| \leq 1$.

Define the operator $\widehat \cQ_\lambda \colon D(H_\sigma) \to \gotC_1^{\rm sa}$
by
\[
\widehat \cQ_\lambda
= E_\lambda (\lambda \, I + H_\sigma)
.  \]
Then
$\|\widehat \cQ_\lambda \rho\|_{\gotC_1} \leq \|(\lambda \, I + H_\sigma) \rho\|_{\gotC_1}$
for all $\rho \in D(H_\sigma)$.
So $\widehat \cQ_\lambda$ is continuous from $D(H_\sigma)$ into~$\gotC_1^{\rm sa}$.

Since $\Psi \circ S^\sigma_t = S^\sigma_t \circ \Psi$ for all $t > 0$,
it follows that $\Psi(\rho) \in D(H_\sigma)$ and
$H_\sigma \Psi(\rho) = \Psi(H_\sigma \rho)$ for all $\rho \in D(H_\sigma)$.
If $\rho \in D(H_\sigma)$, then
\begin{eqnarray}
\widehat \cQ_\lambda \Psi(\rho)
& = & E_\lambda (\lambda \, I + H_\sigma) \Psi(\rho)  \nonumber  \\
& = & E_\lambda \Psi((\lambda \, I + H_\sigma) \rho)
= \cQ \, (\lambda \, I + H_\sigma)^{-1} \Psi((\lambda \, I + H_\sigma) \rho)
= \cQ \Psi(\rho)
.
\label{elkato301;1}
\end{eqnarray}
The map $\rho \mapsto \Psi(\rho)$ is continuous from $\gotC_1^{\rm sa}$ into $D(H_\sigma)$
and $\widehat \cQ_\lambda$ is continuous from $D(H_\sigma)$ into $\gotC_1^{\rm sa}$.
So $\rho \mapsto \widehat \cQ_\lambda \Psi(\rho)$ is continuous from
$\gotC_1^{\rm sa}$ into $\gotC_1^{\rm sa}$.
Also $\rho \mapsto \cQ \Psi(\rho)$ is continuous from
$\gotC_1^{\rm sa}$ into $\gotC_1^{\rm sa}$.
Hence it follows from (\ref{elkato301;1}) that
$\widehat \cQ_\lambda \Psi(\rho) = \cQ \Psi(\rho)$
for all $\rho \in \gotC_1^{\rm sa}$.
In particular, $\widehat \cQ_\lambda$ is an extension of $\cQ$.
Since $\Psi(\gotC_1^{\rm sa})$ is dense in $D(H_\sigma)$,
it follows that $\widehat \cQ_\lambda$ is the unique continuous operator
from $D(H_\sigma)$ into $\gotC_1^{\rm sa}$ which extends
$\cQ$.
Consequently $\widehat \cQ_\lambda$ is independent of $\lambda$ and we set
$\widehat \cQ = \widehat \cQ_1$.

If $\rho \in D(H_\sigma)$, then
\[
\|\widehat \cQ \rho\|_{\gotC_1}
= \|\widehat \cQ_\lambda \rho\|_{\gotC_1}
\leq \|(\lambda \, I + H_\sigma) \rho\|_{\gotC_1}
\leq \lambda \, \|\rho\|_{\gotC_1} + \|H_\sigma \rho\|_{\gotC_1}
\]
for all $\lambda > 0$.
So $\|\widehat \cQ \rho\|_{\gotC_1} \leq \|H_\sigma \rho\|_{\gotC_1}$.

It follows from (\ref{rel-bound-1}) that
$\Tr (\widehat \cQ \Psi(\rho))
= \Tr (\cQ \Psi(\rho))
= \Tr (H_\sigma \Psi(\rho))$
for all $\rho \in \gotC_1^{\rm sa}$.
Then by density and continuity $\Tr (\widehat \cQ \rho) = \Tr (H_\sigma \rho)$
for all $\rho \in D(H_\sigma)$.

It remains to show that $\widehat{\cQ}$ is positivity preserving.
Let $\psi \in \sF$ and $t > 0$.
If $\rho \in \gotC_1^+$, then $S^\sigma_t \rho \in \gotC_1^+$ and
\[
( (\widehat{\cQ} \, S^\sigma_t \, \Psi(\rho))\psi, \psi)_{\sF}
= ( (\widehat{\cQ} \, \Psi(S^\sigma_t \rho))\psi, \psi)_{\sF}
= ( (\cQ \, \Psi(S^\sigma_t \rho))\psi, \psi)_{\sF}
\geq 0
\]
since $\cQ$ is positivity preserving.
Because $\Psi(\gotC_1^+)$ is dense in $\gotC_1^+$, one deduces
that
\[
( (\widehat{\cQ} \, S^\sigma_t \rho)\psi, \psi)_{\sF} \geq 0
\]
for all $\rho \in \gotC_1^+$.
Now let $\rho \in D(H_\sigma)^+$.
Then
$( (\widehat{\cQ} \rho)\psi, \psi)_{\sF}
= \lim_{t \downarrow 0} ( (\widehat{\cQ} \, S^\sigma_t \rho)\psi, \psi)_{\sF}
\geq 0$.
Therefore $\widehat{\cQ}$ is positivity preserving.
\end{proof}

Let $\widehat \cQ$ be as in Lemma~\ref{lkato301}.
Since there will be no confusion, we will denote $\widehat \cQ$ by $\cQ$.

We shall use the general approach developed in Section~\ref{Skg2}.
To this aim we consider a regularisation generated by the family of \textit{projections}
$(P_N)_{N \in \Ni_0}$, where for all $N \in \Ni_0$ the projection
$P_N \colon \sF \to \sF$ is given by
\[
P_N \psi := \sum_{n=0}^N (\psi , e_n)_{\sF} \, e_n  .
\]
Note that the number of bosons in the subspace $P_N \sF$ is bounded because
the boson number operator satisfies $\|\hat n  (P_N  \psi)\|_{\sF} \leq N \, \|\psi\|_{\sF}$
for all $\psi \in \sF$.

Obviously
$\lim_{N \to \infty} P_N \psi = \psi$ for all $\psi \in \sF$.
For all $N \in \Ni_0$ define the \textit{particle number cut-off} regularisation
$\cQ_N  \in \cl(\gotC_1^{\rm sa})$
of the operator $\cQ$ by
\begin{equation}\label{reg-0}
\cQ_N \rho
= \sigma_- \, (b^* \, P_N )^* \, \rho \, (b^* \, P_N)
   + \sigma_+ \, (b \, P_N)^* \, \rho \, (b \, P_N ) .
\end{equation}
Note that $\cQ_N  \rho =  P_N \, ( \cQ \rho) \, P_N$
for all $\rho \in \Psi(\gotC_1^{\rm sa})$ by (\ref{eqK-L-D1}).
Therefore $\|\cQ_N \rho\|_{\gotC_1} \leq \|\cQ \rho\|_{\gotC_1}$
for all $\rho \in \Psi(\gotC_1^{\rm sa})$ and then by density
$\|\cQ_N \rho\|_{\gotC_1} \leq \|\cQ \rho\|_{\gotC_1}$
for all $\rho \in D(H_\sigma)$.

We next verify that $(\cQ_N)_{N \in \Ni_0}$ is a functional regularisation of $\cQ$.
Clearly $\cQ_N$ is positivity preserving for all $N \in \Ni_0$, which is
Condition~\ref{dkg102-1} in Definition~\ref{dkg102}.
The definition of $\cQ_N$ implies the estimate
\[
\|\cQ_N  \rho\|_{\gotC_1}
\leq (\sigma_-(N+1) + \sigma_+ N) \, \|\rho\|_{\gotC_1} ,
\]
for all $\rho \in \gotC_1^{\rm sa}$, which implies Definition~\ref{dkg102}\ref{dkg102-2}.
Since $\sigma_{\pm} \geq 0$, the regularisation (\ref{reg-0}) is
monotone increasing as
a sequence of positivity preserving maps in $\gotC_1^{\rm sa}$, and bounded by $\cQ$.
So Condition~\ref{dkg102-3} in Definition~\ref{dkg102} is valid.
Finally we show that
$\lim_{N \to \infty} ((\cQ_N \rho) \psi, \psi)_{\sF} = ((\cQ \rho) \psi, \psi)_{\sF}$
for all $\rho \in D(H_\sigma)$ and $\psi \in \sF$.
Let $\psi \in \sF$.
Let $\rho \in \Psi(\gotC_1^{\rm sa})$.
Then
\[
\lim_{N \to \infty} ((\cQ_N \rho) \psi, \psi)_{\sF}
= \lim_{N \to \infty} ((\cQ \rho) \, P_N \psi, P_N \psi)_{\sF}
= ((\cQ \rho) \psi, \psi)_{\sF}
\]
for all $\rho \in \Psi(\gotC_1^{\rm sa})$.
Since $\Psi(\gotC_1^{\rm sa})$ is dense in $D(H_\sigma)$ and
$\|\cQ_N \rho\|_{\gotC_1} \leq \|\cQ \rho\|_{\gotC_1}$
for all $\rho \in D(H_\sigma)$ and $N \in \Ni_0$, one deduces that
$\lim_{N \to \infty} ((\cQ_N \rho) \psi, \psi)_{\sF} = ((\cQ \rho) \psi, \psi)_{\sF}$
for all $\rho \in D(H_\sigma)$ and $\psi \in \sF$.
So $(-\cQ_N)_{N \in \Ni_0}$ satisfies Definition~\ref{dkg102}\ref{dkg102-4}.

We proved that the family $(\cQ_N)_{N \in \Ni_0}$ is a
functional regularisation of the
operator $\cQ$.
For all $N \in \Ni$ define the operator $L_{\sigma,N}$ by
\[
L_{\sigma,N} = H_\sigma  - \cQ_N
\]
with domain $D(L_{\sigma,N}) = D(H_\sigma)$.
Let $(T_{t,N}^\sigma)_{t > 0}$ be the semigroup generated by $-L_{\sigma,N}$.
Then it follows from Theorem~\ref{tkg101} that
$(T_{t,N}^\sigma)_{t > 0}$
is a positivity preserving contraction semigroup, so it is a dynamical semigroup.
Moreover, for all $t > 0$ and $\rho \in \gotC_1^{\rm sa}$ the limit
\[
T_t^\sigma \rho
= \lim_{N \to \infty} T_{t,N}^\sigma \rho
\]
exists in $\gotC_1$ and $(T_t^\sigma)_{t > 0}$
is a positivity preserving contraction $C_0$-semigroup on $\gotC_1^{\rm sa}$.
Let $-L_\sigma$ be the generator of $(T_t^\sigma)_{t > 0}$.
Then $L_\sigma$ is an extension of the operator $H_\sigma - \cQ$.
By Theorem~\ref{tkg204} the semigroup $(T_t^\sigma)_{t > 0}$ is minimal in the
following sense:
If $(\widehat T_t^\sigma)_{t > 0}$ is a positivity preserving $C_0$-semigroup
with generator $- \widehat L_\sigma$,
which is an extension of $- (H_\sigma - \cQ)$,
then $\widehat T_t^\sigma \geq T_t^\sigma$ for all $t > 0$.

\subsection{Core property and trace-preserving}

A priori it is unclear whether the (minimal) dynamical semigroup $(T^\sigma_t)_{t > 0}$
is trace-preserving (and hence is a Markov dynamical semigroup).
We know that $\Tr (H_\sigma \rho - \cQ \rho) = 0$ for all $\rho \in D(H_\sigma)$
by Lemma~\ref{lkato301}.
Therefore if $D(H_\sigma)$ is a core for $L_\sigma$, then we can use
Theorem~\ref{tkg205} to conclude that the semigroup $(T^\sigma_t)_{t > 0}$
is trace-preserving.
We shall show that this is the case if $\sigma_+ < \sigma_-$.

\begin{thm} \label{D(L)-coreM}
If $\sigma_+ < \sigma_-$, then the domain $D(H_\sigma)$ is a core for $L_\sigma$.
\end{thm}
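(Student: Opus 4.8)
The plan is to verify the standard core criterion by means of a Lyapunov estimate built on the number operator $\hat n$. Since $-L_\sigma$ generates a contraction $C_0$-semigroup, $L_\sigma$ is $m$-accretive, $\lambda\,I+L_\sigma$ is boundedly invertible for every $\lambda>0$, and $L_\sigma$ coincides with $H_\sigma-\cQ$ on $D(H_\sigma)$; hence $D(H_\sigma)$ is a core for $L_\sigma$ if and only if the range $(\lambda\,I+H_\sigma-\cQ)(D(H_\sigma))$ is dense in $\gotC_1^{\rm sa}$ for one (equivalently every) $\lambda>0$. Fix $\lambda>\sigma_+$ and a finite-rank $u\in\gotC_1^+$; as the finite-rank self-adjoint operators are total in $\gotC_1^{\rm sa}$, it suffices to treat such $u$. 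Put $w_0=u$, $w_{n+1}=\cQ\,(\lambda\,I+H_\sigma)^{-1}w_n$ and $V_N=\sum_{n=0}^{N}(\lambda\,I+H_\sigma)^{-1}w_n\in D(H_\sigma)$. By Lemma~\ref{lkg246}, applied to $H_\sigma,\cQ,L_\sigma$ in place of $H,K,L$, the sequence $(V_N)$ is increasing and converges in $\gotC_1$ to $\rho:=(\lambda\,I+L_\sigma)^{-1}u$, while telescoping the defining sums gives $(\lambda\,I+H_\sigma-\cQ)V_N=u-w_{N+1}$ with $w_{N+1}\in\gotC_1^+$. Thus the statement reduces to showing $w_N\to0$ in $\gotC_1$. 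Note that all the $w_n$ and all the $V_N$ are finite-rank operators: $(\lambda\,I+H_\sigma)^{-1}=\int_0^\infty e^{-\lambda t}S^\sigma_t\,dt$ commutes with the spectral projections $P_M$ of $\hat n$ (because $U_{h_\sigma}(t)$ does), hence preserves the class of operators $x$ with $x=P_M\,x\,P_M$, whereas $\cQ$ maps operators supported in $P_M\sF$ into operators supported in $P_{M+1}\sF$; so $w_n$ is supported in $P_{M_0+n}\sF$ when $u$ is supported in $P_{M_0}\sF$. In particular every trace occurring below is a finite matrix trace.

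The crucial point is a Lyapunov identity: for finite-rank $\rho$, using the explicit forms $H_\sigma\rho=h_\sigma\rho+\rho\,h_\sigma^*$ and $\cQ\rho=\sigma_-\,b\rho b^*+\sigma_+\,b^*\rho\,b$ valid on $\Psi(\gotC_1^{\rm sa})$ (see (\ref{h-sigma}) and the definition (\ref{eqK-L-D1})), the relations $b^*(I+\hat n)b=\hat n^2$ and $b(I+\hat n)b^*=(\hat n+1)(\hat n+2)$, and cyclicity of the trace, one computes
\[
\Tr\big((I+\hat n)(H_\sigma-\cQ)\rho\big)=(\sigma_--\sigma_+)\,\Tr(\hat n\,\rho)-\sigma_+\,\Tr\rho ,
\]
which is the Schr\"odinger-picture form of the operator identity $(H_\sigma^*-\cQ^*)(I+\hat n)=(\sigma_--\sigma_+)\,\hat n-\sigma_+\,I$.

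Applying this identity to $\rho=V_N$, substituting $(\lambda\,I+H_\sigma-\cQ)V_N=u-w_{N+1}$ and $\Tr((I+\hat n)V_N)=\Tr V_N+\Tr(\hat n\,V_N)$, one gets
\[
\Tr\big((I+\hat n)w_{N+1}\big)=\Tr\big((I+\hat n)u\big)-(\lambda-\sigma_+)\,\Tr V_N-(\lambda+\sigma_--\sigma_+)\,\Tr(\hat n\,V_N) .
\]
This is where the hypothesis $\sigma_+<\sigma_-$ is used: it forces $\sigma_->0$ and, together with $\lambda>\sigma_+$, makes both subtracted terms nonnegative (recall $V_N\ge0$ and $\hat n\,V_N\ge0$). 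Since the left-hand side is nonnegative as well, we conclude $\sup_N\Tr(\hat n\,V_N)\le\Tr((I+\hat n)u)/(\lambda+\sigma_--\sigma_+)<\infty$ and, a fortiori, $\sup_N\Tr\big((I+\hat n)w_N\big)\le\Tr((I+\hat n)u)<\infty$.

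Finally I would pass from this uniform first-moment bound to norm convergence by tightness. For $w\in\gotC_1^+$ with $\Tr w+\Tr(\hat n\,w)\le C$ one has $\Tr((I-P_M)w)\le C/(M+1)$, hence $\|w-P_M\,w\,P_M\|_{\gotC_1}\le 2C\,(M+1)^{-1/2}$ by the Cauchy--Schwarz inequality for the Hilbert--Schmidt norm; therefore $\{\,w\in\gotC_1^+:\Tr w+\Tr(\hat n\,w)\le C\,\}$ is relatively norm-compact in $\gotC_1$. By the previous paragraph $(w_N)$ lies in such a set, while $(\lambda\,I+H_\sigma)^{-1}w_N\to0$ in $\gotC_1$, being the general term of the convergent series $\sum_n(\lambda\,I+H_\sigma)^{-1}w_n$, and $(\lambda\,I+H_\sigma)^{-1}$ is injective; hence every norm-convergent subsequence of $(w_N)$ has limit $0$, so $w_N\to0$ in $\gotC_1$. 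Consequently $(\lambda\,I+H_\sigma-\cQ)V_N=u-w_{N+1}\to u$ while $V_N\to\rho$, so $u$ lies in the closure of $(\lambda\,I+H_\sigma-\cQ)(D(H_\sigma))$; as $u$ ranges over a total subset of $\gotC_1^{\rm sa}$, this range is dense and $D(H_\sigma)$ is a core for $L_\sigma$. I expect the main obstacle to be exactly this last tightness/compactness step, which is what turns the bare uniform bound on $\Tr(\hat n\,w_N)$ into $\gotC_1$-convergence; the preliminary finite-rank reduction is what makes the Lyapunov identity meaningful, since the first moments $\Tr(\hat n\,w_n)$ must be finite.
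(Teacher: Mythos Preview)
Your argument is correct, but it follows a genuinely different route from the paper's proof. The paper does not work with the Neumann-series remainders $w_N$ at all; instead it introduces the intertwining contraction $R\rho=e^{-s\hat n}\rho\,e^{-s\hat n}$ (with $s>0$ chosen so that $\sigma_+e^{2s}<\sigma_-$), computes that $(H_\sigma-\cQ)R=R(H_\sigma-\widetilde\cQ)$ with $\widetilde\cQ=e^{-2s}\cQ_-+e^{2s}\cQ_+$, and checks that $\Tr(\widetilde\cQ\rho)\le r\,\Tr(H_\sigma\rho)+c\,\Tr\rho$ with $r<1$. Proposition~\ref{pkg230}\ref{pkg230-1} then makes $H_\sigma-\widetilde\cQ$ quasi-$m$-accretive, so $(\lambda I+L_\sigma)(D(H_\sigma))\supset R(\gotC_1^{\rm sa})$, which is dense. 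Thus the paper reduces the relative-bound-one problem to a relative-bound-less-than-one problem via an algebraic conjugation.

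Your approach is instead a Lyapunov/moment method: the identity $\Tr\big((I+\hat n)(H_\sigma-\cQ)\rho\big)=(\sigma_--\sigma_+)\Tr(\hat n\rho)-\sigma_+\Tr\rho$ plays the role of the dissipativity gained in the paper by passing to $\widetilde\cQ$, and the compactness of first-moment sublevel sets replaces the invertibility of $\lambda I+H_\sigma-\widetilde\cQ$. Both arguments use $\sigma_+<\sigma_-$ at exactly one place (to get $r<1$ in the paper, to get the sign of the $\Tr(\hat n V_N)$ term in yours). The paper's proof is shorter and more algebraic, exploiting the exact commutation $b\,e^{-s\hat n}=e^{-s}e^{-s\hat n}b$; your proof is longer but more robust in spirit, since it only needs an inequality of Lyapunov type rather than an exact intertwining, and is closer to the classical Foster--Lyapunov arguments for Markov chains. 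The finite-rank reduction you make at the outset is what keeps all the $\hat n$-moments finite and the trace manipulations honest; the paper avoids this issue entirely because its proof never computes a moment.
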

\begin{proof}
Fix $s \in (0, \infty)$ such that $\sigma_+ \, e^{2s} < \sigma_-$.
Define the map $R \colon \gotC_1^{\rm sa} \to \gotC_1^{\rm sa}$ by
\[
R \rho := e^{-s \hat n} \, \rho \, e^{-s \hat n} .
\]
Then $R$ is a  positivity preserving contraction and
$R(\gotC_1^{\rm sa})\subset \Psi(\gotC_1^{\rm sa})$.
If $t > 0$, then $S^\sigma_t$ and $R$ commute.
Hence if $\rho \in D(H_\sigma)$, then $R \rho \in D(H_\sigma)$ and
\begin{equation}\label{R-s3}
H_\sigma \, R  \rho = R \, H_\sigma \rho .
\end{equation}

Let $\cQ_-$ and $\cQ_+$ be the positive operators in $\gotC_1^{\rm sa}$ with
domain $D(\cQ_-) = D(\cQ_+) = \Psi(\gotC_1^{\rm sa})$, defined similarly as in
(\ref{eqK-L-D1}) such that
\[
\cQ_- \rho
\supset \sigma_- \, b \, \rho \, b^*
\quad \mbox{and} \quad
\cQ_+ \rho
\supset \sigma_+ \, b^* \, \rho \, b
.  \]
Then $\Tr (\cQ_- \rho) \leq \Tr (\cQ \rho) = \Tr (H_\sigma \rho)$ and
$\Tr (\cQ_+ \rho) \leq \Tr (H_\sigma \rho)$
for all $\rho \in \Psi(\gotC_1^+)$.
Arguing as in Lemma~\ref{lkato301} one deduces that that there exist unique continuous
extensions of $\cQ_+$
and of $\cQ_-$, also denoted by
$\cQ_+$ and $\cQ_-$, with domain $D(H_\sigma)$,
such that
$\|\cQ_+ \rho\|_{\gotC_1} \leq \|H_\sigma \rho\|_{\gotC_1}$
and $\|\cQ_- \rho\|_{\gotC_1} \leq \|H_\sigma \rho\|_{\gotC_1}$ for all $\rho \in D(H_\sigma)$.
Then $\cQ = \cQ_- + \cQ_+$.
Note that
\[
b \, e^{-s \hat n} \rho
= e^{-s} \, e^{-s \hat n} \, b \rho
\quad \mbox{and} \quad
e^{-s \hat n} \, b^* \rho
= e^{-s} \, b^* \, e^{-s \hat n} \rho
\]
for all $\rho \in D(\hat n)$.
Therefore
\[
\cQ_- \, R \rho = e^{-2s} R \, \cQ_- \rho
\quad \mbox{and} \quad
\cQ_+ \, R \rho = e^{2s} R \, \cQ_+ \rho
\]
first for all $\rho \in \Psi(\gotC_1^{\rm sa})$ and
then by density for all $\rho \in D(H_\sigma)$.
Together with (\ref{R-s3}) this implies that
\begin{equation} \label{L-R-L-bis}
(H_\sigma - \cQ) \, R \rho
= R \, (H_\sigma  - e^{-2s} \, \cQ_- - e^{2s} \, \cQ_+) \rho
= R \, (H_\sigma  - \widetilde{\cQ}) \rho
\end{equation}
for all $\rho \in D(H_\sigma)$,
where the positivity preserving operator
$\widetilde{\cQ} \colon D(H_\sigma) \to \gotC_1^{\rm sa}$ is defined by
\[
\widetilde{\cQ}
= e^{-2s} \, \cQ_- + e^{2s} \, \cQ_+
.  \]
Define
\[
r = \frac{e^{-2s} \, \sigma_- + e^{2s} \, \sigma_+}{\sigma_- + \sigma_+} .
\]
Then $r \in (0,1)$ since $s > 0$ and $\sigma_+ \, e^{2s} < \sigma_-$.
Moreover,
$e^{-2s} - r = - \frac{2 \sigma_+ \sinh 2s}{\sigma_- + \sigma_+}$
and $e^{2s} - r = \frac{2 \sigma_- \sinh 2s}{\sigma_- + \sigma_+}$.
Therefore
\begin{eqnarray*}
\Tr (\widetilde \cQ \rho)
& = & e^{-2s} \, \sigma_- \Tr (b^* \, b \, \rho)
     + e^{2s} \, \sigma_+ \Tr (b \, b^* \, \rho)   \\
& = & r \, \sigma_- \Tr (b^* \, b \, \rho) + r \, \sigma_+ \Tr (b \, b^* \, \rho)
   + (e^{-2s} - r) \, \sigma_- \Tr (b^* \, b \, \rho)
   + (e^{2s} - r) \, \sigma_+ \Tr (b \, b^* \, \rho)  \\
& = & r \Tr (H_\sigma \rho)
   + \frac{2 \sigma_- \, \sigma_+ \, \sinh 2s}{\sigma_- + \sigma_+} \,
        \Tr ((b \, b^* - b^* \, b) \, \rho)  \\
& = & r \Tr (H_\sigma \rho)
      + \frac{2 \sigma_- \, \sigma_+ \, \sinh 2s}{\sigma_- + \sigma_+} \, \Tr \rho
\end{eqnarray*}
for all $\rho \in \Psi(\gotC_1^{\rm sa})$,
where we use the canonical commutation relations in the last step.
Hence
\[
\Tr (\widetilde \cQ \rho)
= r \Tr (H_\sigma \rho)
      + \frac{2 \sigma_- \, \sigma_+ \, \sinh 2s}{\sigma_- + \sigma_+} \, \Tr \rho
\]
for all $\rho \in D(H_\sigma)$ by density and continuity.

It follows from Proposition~\ref{pkg230}\ref{pkg230-1} that the operator
$H_\sigma - \widetilde \cQ$ is quasi-$m$-accretive.
Hence there exists a $\lambda > 0$ such that $\lambda \, I + H_\sigma - \widetilde \cQ$ is
invertible.
Since $L_\sigma$ is an extension of $H_\sigma - \cQ$ it follows from (\ref{L-R-L-bis})
that
\[
(\lambda \, I + L_\sigma) R \rho
= (\lambda \, I + H_\sigma - \cQ) R \rho
= R (\lambda \, I + H_\sigma - \widetilde \cQ) \rho
\]
for all $\rho \in D(H_\sigma)$.
Hence
\[
(\lambda \, I + L_\sigma) (D(H_\sigma))
\supset (\lambda \, I + L_\sigma) R (D(H_\sigma))
= R (\lambda \, I + H_\sigma - \widetilde \cQ) (D(H_\sigma))
= R (\gotC_1^{\rm sa})
\]
is dense in $\gotC_1^{\rm sa}$.
Consequently $D(H_\sigma)$ is dense in $D(L_\sigma)$, that is
$D(H_\sigma)$ is a core for $L_\sigma$.
\end{proof}

\begin{cor} \label{ckg320}
If $\sigma_+ < \sigma_-$, then the semigroup $(T^\sigma_t)_{t > 0}$ is
trace-preserving.
\end{cor}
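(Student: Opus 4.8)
The plan is simply to assemble pieces that are already in place: Corollary~\ref{ckg320} should follow by applying Theorem~\ref{tkg205} to $H = H_\sigma$ and $K = \cQ$. So the first thing I would do is confirm that the hypotheses inherited from Theorem~\ref{tkg101} are satisfied in the present setting; these were in fact checked in Section~\ref{RvPNC-O}. Namely, $-H_\sigma$ generates a positivity preserving contraction $C_0$-semigroup on $\gotC_1^{\rm sa}$, the extended operator $\cQ$ is positivity preserving by Lemma~\ref{lkato301}, and $(\cQ_N)_{N \in \Ni_0}$ is a functional regularisation of $\cQ$ in the sense of Definition~\ref{dkg102}. Moreover Lemma~\ref{lkato301} gives $\Tr(\cQ\rho) = \Tr(H_\sigma\rho)$ for all $\rho \in D(H_\sigma)$, so the inequality~(\ref{eqtk102}) required in Theorem~\ref{tkg101} holds (with equality). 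Hence $(T^\sigma_t)_{t > 0}$ and $-L_\sigma$ are precisely the semigroup and generator furnished by Theorem~\ref{tkg101}.

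Next I would verify the two additional hypotheses of Theorem~\ref{tkg205}. Condition~(\ref{eqtk203;1}), namely $\Tr(H_\sigma\rho - \cQ\rho) = 0$ for all $\rho \in D(H_\sigma)$, is exactly the trace identity established in Lemma~\ref{lkato301}. The remaining requirement, that $D(H_\sigma)$ be a core for $L_\sigma$, is where the standing assumption $\sigma_+ < \sigma_-$ enters: it is precisely the content of Theorem~\ref{D(L)-coreM}. With both of these available, Theorem~\ref{tkg205} applies directly and yields $\Tr(T^\sigma_t \rho) = \Tr \rho$ for all $\rho \in \gotC_1^{\rm sa}$ and all $t > 0$, i.e.\ $(T^\sigma_t)_{t > 0}$ is trace-preserving.

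I do not expect any genuine obstacle at this stage; the corollary is essentially a bookkeeping step. All of the substantive work has already been carried out: the boundedness estimate of Lemma~\ref{lkato300.5}, the construction and the trace identity for the extension $\cQ$ in Lemma~\ref{lkato301}, and above all the core property of Theorem~\ref{D(L)-coreM}, which is the one place where $\sigma_+ < \sigma_-$ is really used (through the dilation $R\rho = e^{-s\hat n}\,\rho\,e^{-s\hat n}$ and the computation of $\Tr(\widetilde\cQ\rho)$). Combined with the positivity preservation and contractivity of $(T^\sigma_t)_{t > 0}$ recorded earlier, trace preservation upgrades $(T^\sigma_t)_{t > 0}$ to a Markov dynamical semigroup, which is the reason for stating this corollary.
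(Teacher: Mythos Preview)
Your proposal is correct and matches the paper's own proof exactly: the paper simply cites Theorem~\ref{tkg205}, Lemma~\ref{lkato301} and Theorem~\ref{D(L)-coreM}, which is precisely the combination you describe. Your write-up is just a more detailed unpacking of that one-line argument.
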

\begin{proof}
This follows from Theorem~\ref{tkg205}, Lemma~\ref{lkato301} and
Theorem~\ref{D(L)-coreM}.
\end{proof}

\begin{cor} \label{cPsiX-coreM}
If $\sigma_+ < \sigma_-$, then the set $\Psi(\gotC_1^{\rm sa})$ is a core
for the operator $L_\sigma$.
\end{cor}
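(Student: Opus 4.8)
The plan is to deduce this from transitivity of the core property, using three facts that are already available: $\Psi(\gotC_1^{\rm sa})$ is a core for $H_\sigma$ (established in Subsection~\ref{Sskg3}), $D(H_\sigma)$ is a core for $L_\sigma$ (Theorem~\ref{D(L)-coreM}), and the relative bound one estimate $\|\cQ \rho\|_{\gotC_1} \leq \|H_\sigma \rho\|_{\gotC_1}$ for all $\rho \in D(H_\sigma)$ (Lemma~\ref{lkato301}, with $\cQ = \widehat{\cQ}$ as fixed by the convention following that lemma). The key observation is that on $D(H_\sigma)$ the graph norm of $L_\sigma$ is controlled by the graph norm of $H_\sigma$.

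First I would record that $\Psi(\gotC_1^{\rm sa}) \subset D(H_\sigma) \subset D(L_\sigma)$, and that $L_\sigma \rho = H_\sigma \rho - \cQ \rho$ for all $\rho \in D(H_\sigma)$ because $L_\sigma$ extends $H_\sigma - \cQ$ and $D(H_\sigma - \cQ) = D(H_\sigma)$. Hence, for $\rho \in D(H_\sigma)$,
\[
\|\rho\|_{\gotC_1} + \|L_\sigma \rho\|_{\gotC_1}
\leq \|\rho\|_{\gotC_1} + \|H_\sigma \rho\|_{\gotC_1} + \|\cQ \rho\|_{\gotC_1}
\leq \|\rho\|_{\gotC_1} + 2 \, \|H_\sigma \rho\|_{\gotC_1}
\]
by Lemma~\ref{lkato301}. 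Thus convergence in the $H_\sigma$-graph norm implies convergence in the $L_\sigma$-graph norm within $D(H_\sigma)$.

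Now let $\rho \in D(L_\sigma)$ and $\varepsilon > 0$. By Theorem~\ref{D(L)-coreM} there is $\eta \in D(H_\sigma)$ with $\|\rho - \eta\|_{\gotC_1} + \|L_\sigma \rho - L_\sigma \eta\|_{\gotC_1} < \varepsilon/2$. Since $\Psi(\gotC_1^{\rm sa})$ is a core for $H_\sigma$, choose $\zeta \in \Psi(\gotC_1^{\rm sa})$ with $\|\eta - \zeta\|_{\gotC_1} + \|H_\sigma \eta - H_\sigma \zeta\|_{\gotC_1}$ small enough that, applying the displayed estimate to $\eta - \zeta$, also $\|\eta - \zeta\|_{\gotC_1} + \|L_\sigma \eta - L_\sigma \zeta\|_{\gotC_1} < \varepsilon/2$. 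Then $\zeta \in \Psi(\gotC_1^{\rm sa})$ and $\|\rho - \zeta\|_{\gotC_1} + \|L_\sigma \rho - L_\sigma \zeta\|_{\gotC_1} < \varepsilon$, so $\Psi(\gotC_1^{\rm sa})$ is dense in $D(L_\sigma)$ for the graph norm, i.e.\ a core for $L_\sigma$. Finally one invokes Corollary~\ref{ckg320} (or directly Theorem~\ref{tkg205} together with Lemma~\ref{lkato301}) only if one also wants to restate trace preservation; for the corollary itself the argument above suffices.

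I do not expect any real obstacle: the content is entirely the equivalence of the $H_\sigma$- and $L_\sigma$-graph norms on $D(H_\sigma)$, which is immediate from the relative bound one estimate. The only point requiring care is to work consistently with the extended operator $\widehat{\cQ}$ on $D(H_\sigma)$ (rather than the original $\cQ$ on $\Psi(\gotC_1^{\rm sa})$), so that $L_\sigma \eta = H_\sigma \eta - \widehat{\cQ}\eta$ makes sense for every $\eta \in D(H_\sigma)$ used in the approximation.
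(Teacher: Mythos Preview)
Your proof is correct and follows the same approach as the paper, which simply states that $\Psi(\gotC_1^{\rm sa})$ is dense in $D(H_\sigma)$ and $D(H_\sigma)$ is dense in $D(L_\sigma)$, hence $\Psi(\gotC_1^{\rm sa})$ is dense in $D(L_\sigma)$. You have made explicit the graph-norm comparison (via the bound $\|\cQ\rho\|_{\gotC_1}\le\|H_\sigma\rho\|_{\gotC_1}$) that the paper's two-line argument tacitly uses to justify this transitivity; the final remark about Corollary~\ref{ckg320} is unnecessary, as you note.
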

\begin{proof}
The set
$\Psi(\gotC_1^{\rm sa})$ is dense in $D(H_\sigma)$.
Moreover, $D(H_\sigma)$ is dense in $D(L_\sigma)$ by Theorem~\ref{D(L)-coreM}.
Hence $\Psi(\gotC_1^{\rm sa})$ is dense in $D(L_\sigma)$,
that is $\Psi(\gotC_1^{\rm sa})$ is a core for the operator~$L_\sigma$.
\end{proof}

The proof of Theorem~\ref{D(L)-coreM} is heavily based on the
strict inequality $\sigma_+ < \sigma_-$.
We do not know whether $D(H_\sigma)$ is a core for $L_\sigma$
if $\sigma_+ = \sigma_- > 0$.

\medskip

We comment that an alternative regularisation for $\cQ$ is possible.
For all $N \in \Ni_0$ define $\checkQ_N \in \cl(\gotC_1^{\rm sa})$
by
\[
\checkQ_N \rho
= \cQ(P_N \, \rho \, P_N)
.  \]
It is easy to verify that $(\checkQ_N)_{N \in \Ni_0}$ satisfies
Conditions~\ref{dkg102-1}, \ref{dkg102-2} and \ref{dkg102-3} in
Definition~\ref{dkg102}.
We next verify Condition~\ref{dkg102-4}.
Choose $V = D(b) = D(b^*)$.
Then $V$ is dense in $\sF$.
Let $\psi \in V$.
If $\rho \in \Psi(\gotC_1^{\rm sa})$ and $N \in \Ni_0$, then
$((\checkQ_N \rho) \psi, \psi)_{\sF}
= \sigma_- \, (\rho \, P_N \, b^* \psi, P_N \, b^* \psi)_{\sF}
   + \sigma_+ \, (\rho \, P_N \, b \psi, P_N \, b \psi)_{\sF}$.
So $\lim_{N \to \infty} ((\checkQ_N \rho) \psi, \psi)_{\sF}
= ((\cQ \rho) \psi, \psi)_{\sF}$ for all $\rho \in \Psi(\gotC_1^{\rm sa})$.
If $N \in \Ni_0$, then
\begin{equation}
|(\cQ(P_N \, \rho \, P_N) \psi, \psi)_{\sF}|
\leq \sigma_- \, \|\rho\|_{\gotC_1} \, \|b^* \psi\|_{\sF}^2
   + \sigma_+ \, \|\rho\|_{\gotC_1} \, \|b \psi\|_{\sF}^2
\label{Skg3;50}
\end{equation}
for all $\rho \in \Psi(\gotC_1^{\rm sa})$, hence by continuity and density
of $\Psi(\gotC_1^{\rm sa})$ in $D(H_\sigma)$, the inequality
(\ref{Skg3;50}) is valid for all $\rho \in D(H_\sigma)$.
Therefore
$\lim_{N \to \infty} ((\checkQ_N \rho) \psi, \psi)_{\sF}
= ((\cQ \rho) \psi, \psi)_{\sF}$ for all $\rho \in D(H_\sigma)$.
So $(\checkQ_N)_{N \in \Ni_0}$
is a functional regularisation of $\cQ$.
It follows from the uniqueness in Corollary~\ref{ckg206} that
$(T^\sigma_t)_{t > 0}$ is the associated semigroup again.

\section*{Acknowledgements }

VAZ is grateful to Department of Mathematics of the University of Auckland and to
Tom ter Elst for a warm hospitality.
His visits were supported by the
EU Marie Curie IRSES program, project `AOS',
No.~318910 and by the Marsden Fund Council from
Government funding, administered by the Royal Society of New Zealand.

VAZ is also thankful to Alessandro Giuliani for a fruitful discussion on the
boson open systems,
which motivated him to consider a revision of the standard Kato regularisation.

\small

\noindent
{\sc A.F.M. ter Elst,
Department of Mathematics,
University of Auckland,
Private bag 92019,
Auckland 1142,
New Zealand}  \\
{\em E-mail address}\/: {\bf terelst@math.auckland.ac.nz}

\mbox{}

\noindent
{\sc Valentin Zagrebnov,
Institut de Math\'{e}matiques de Marseille - UMR 7373,
CMI-AMU, Technop\^{o}le Ch\^{a}teau-Gombert,
39, rue F. Joliot Curie, 13453 Marseille Cedex 13, France  \\
{\rm and} \\
D\'{e}partement de Math\'{e}matiques,
Universit\'{e} d'Aix-Marseille - Luminy, Case 901,
163 av.de Luminy, 13288 Marseille Cedex 09, France
}  \\
{\em E-mail address}\/: {\bf Valentin.Zagrebnov@univ-amu.fr}

\end{document}